\author{Evan Chen\affiliationmark{1}
	\and Shyam Narayanan\affiliationmark{2}}
\title[Length five quasi-consecutive patterns]%
	{The 26 Wilf-equivalence classes of length
	five quasi-consecutive patterns\thanks{This research
	was funded by NSF grant 1358659 and NSA grant H98230-16-1-0026
	as part of the 2016 Duluth Research Experience for Undergraduates (REU).}}
\affiliation{
  Massachusetts Institute of Technology, Cambridge, MA \\
  Harvard University, Cambridge, MA}
\keywords{permutation pattern, vincular pattern, Wilf-equivalence}
\newtheorem{theorem}{Theorem}[section]
\newtheorem*{theorem*}{Theorem}
\newtheorem{lemma}[theorem]{Lemma}
\newtheorem*{lemma*}{Lemma}
\newtheorem{proposition}[theorem]{Proposition}
\newtheorem*{proposition*}{Proposition}
\theoremstyle{definition}
\newtheorem{definition}[theorem]{Definition}
\newtheorem*{definition*}{Definition}
\newtheorem{example}[theorem]{Example}
\newtheorem*{example*}{Example}
\newtheorem{defn}[theorem]{Definition}
\newtheorem*{defn*}{Definition}
\newtheorem{corollary}[theorem]{Corollary}
\newtheorem*{corollary*}{Corollary}
\renewcommand{\-}{\text{-}}
\newcommand{\ii}{\item}
\newcommand{\fs}{\equiv_{\textrm{fs}}}
\begin{document}
\publicationdetails{20}{2018}{2}{12}{4030}
\maketitle
\begin{abstract}
	We present two families of Wilf-equivalences
	for consecutive and quasi-consecutive vincular patterns.
	These give new proofs of the classification of consecutive
	patterns of length $4$ and $5$.
	We then prove additional equivalences to explicitly classify all
	quasi-consecutive patterns of length $5$ into 26 Wilf-equivalence classes.
\end{abstract}

\section{Introduction}
In the theory of pattern avoidance,
vincular patterns (generalizing classical patterns)
were introduced by Babson and Steingr\'{\i}msson in \cite{Babson2000}.
The aim of this paper is to prove several new Wilf-equivalences
for certain such patterns of length five.

First, we recall the definition of a classical pattern:
\begin{definition}
	We say two sequences $(a_1, \dots, a_k)$ and $(b_1, \dots, b_k)$
	of positive integers are \emph{order-isomorphic} if
	for any $1 \le i,j \le k$, we have $a_i < a_j$ if and only if $b_i < b_j$.
	In this case, we write $a_1 \cdots a_k \sim b_1 \cdots b_k$.
\end{definition}
\begin{definition}
	Let $\sigma \in S_k$
	(here $S_k$ is the set of permutations of $\{1, \dots, k\}$).
	A permutation $\pi \in S_n$ ($n \ge k$) \emph{contains} $\sigma$ as a \emph{classical pattern}
	if there exists $1 \le i_1 < \dots < i_k \le n$ such that 
	\[ \pi_{i_1} \pi_{i_2} \cdots \pi_{i_k} \sim \sigma_1\sigma_2\cdots\sigma_k. \]
	Alternatively, if a permutation does not contain $\sigma$,
	it is said to \emph{avoid} $\sigma$.
\end{definition}
\begin{example}
	The permutation $146235$ avoids the classical pattern $321$.
\end{example}

We now define a so-called vincular pattern,
which is a classical pattern with the additional requirement
that certain pairs of indices must be consecutive.
\begin{definition}
	A \emph{vincular pattern} is a combination of a
	permutation and a set of adjacencies,
	which is formally a pair $(\sigma, T)$
	where $\sigma \in S_k$ and $T \subset \{1, 2, \dots, k-1\}$
	(where $i \in T$ signifies that $i$ and $i+1$ should be adjacent).
	
    We generally denote this by a dashed pattern,
	in which $\pi_i$ and $\pi_{i+1}$ are joined by a dash if $i \notin T$.
	For example, $(2413, \{1,3\})$ is usually written $24\-13$.
\end{definition}

\begin{definition}
	We say a permutation $\pi \in S_n$ \emph{contains} the vincular pattern
	$\sigma = (\sigma_1\cdots\sigma_k, T)$ if there exists $1 \le i_1 < \dots < i_k \le n$ such that 
	\[ \pi_{i_1} \pi_{i_2} \cdots \pi_{i_k} \sim \sigma_1\sigma_2\cdots\sigma_k
		\qquad \text{and}\quad i_{j+1} = i_j+1 \;
		\text{for all } j \in T  \]
	and otherwise $\pi$ \emph{avoids} $\sigma$.

	We then define the following special types of vincular patterns:
	\begin{itemize}
		\ii A \emph{classical} pattern (as defined above) is then a pattern
		of the form $\sigma_1 \- \sigma_2 \- \cdots \- \sigma_n$ (all possible dashes).
		Thus vincular patterns subsume classical patterns.
		\ii We say a pattern of the form $\sigma_1 \sigma_2 \cdots \sigma_n$
		(with no dashes) is a \emph{consecutive} pattern.
		\ii We say a pattern of the form $\sigma_1 \sigma_2 \cdots \sigma_{n-1}\-\sigma_n$
		(with exactly one dash at the end) is a \emph{quasi-consecutive} pattern.
	\end{itemize}
\end{definition}
\begin{example}
	The permutation $\pi = 3275164$ contains $24\-13$ (via $\pi_2\pi_3\pi_5\pi_6$).
\end{example}

The following is the key definition in the study of pattern avoidance.
\begin{definition}
	For each $n$ and a vincular pattern $\sigma$,
	we define $S_n(\sigma)$ as the set of permutations in $S_n$ that avoid $\sigma$.
	We say $\sigma$ and $\tau$ are \emph{Wilf-equivalent}
	(written as $\sigma \equiv \tau$) if 
	\[ \left\lvert S_n(\sigma) \right\rvert
		= \left\lvert S_n(\tau) \right\rvert \]
	for every positive integer $n$.
\end{definition}

\begin{example}
	Trivial examples of Wilf-equivalences include the
	\emph{reverse} and \emph{complement} of vincular patterns.  

	For example, if $\sigma = 12\-4\-3$,
	then the reverse $\sigma^r = 3\-4\-21$ is Wilf-equivalent to $\sigma$
	because there is an obvious bijection $S_n(\sigma) \to S_n(\sigma^r)$
	by reversing the letters.
	Similarly, the complement $\sigma^c = 43\-1\-2$ is also
	Wilf-equivalent to $\sigma$ since if $\pi = \pi_1\pi_2\ldots\pi_n$ avoids $\sigma$,
	then its complement $\pi^c = (n+1-\pi_1)\cdots(n+1-\pi_n)$ avoids $\sigma^c$,
	and vice versa.
	Combining both gives us the equivalences
	$12\-4\-3 \equiv 43\-1\-2 \equiv 2\-1\-34 \equiv 3\-4\-21.$
\end{example}

Wilf-equivalence is clearly an equivalence relation, and thus groups vincular patterns of a fixed length into equivalence classes.
The core question in the theory of Wilf equivalences is to
classify patterns completely into such equivalence classes.

We comment briefly on what is already known;
more details are provided in Section~\ref{sec:table}.
The Wilf-equivalence classification for vincular patterns of length $3$
was completed by Claesson \cite{Claesson2001}, and while
the classification for vincular patterns of length $4$ has not yet been completed,
results by Baxter and Shattuck \cite{BaxterShattuckMainPaper}
and earlier results from
\cite{Baxter2013,Elizalde2006,ElizaldeNoy2003,Kasraoui2013,Kitaev2005}
reduced the problem to two remaining specific conjectures.
However, results from \cite{ElizaldeNoy2003,Nakamura2011,ElizaldeNoy2012}
have fully completed the consecutive case for $k \in \{4,5,6\}$.
Wilf-equivalences of vincular patterns have also been studied in multisets instead of in permutations, such as in Mansour and Shattuck \cite{MansourShattuck2015}.
Wilf-equivalence classification for classical patterns
up to $k = 7$ has been completed,
with both the $k = 6$ and $k = 7$ case completed by Stankova and West
\cite{StankovaWest2002}.
Wilf-equivalence classification for involutions avoiding classical patterns up to $k = 7$ has also been completed, where two patterns are considered equivalent if for all $n$ the number of involutions in $S_n$ avoiding the patterns are equal.  The cases of $k = 5, 6, 7$ were completed by Dukes et al. \cite{DukesEtAl2009}.

In this paper, we focus on \emph{quasi-consecutive} vincular patterns,
which are patterns of the form $\sigma_1\sigma_2\cdots\sigma_{k} \- \sigma_{k+1}$. 
The structure of this paper is as follows.
	In Section~\ref{sec:table}
	we list the table of equivalence classes and describe
	exactly which equivalences remain to be shown.
	In Section~\ref{sec:BijectionCons},
	we provide a bijective result (Theorem~\ref{thm:BijectionCons}),
	which gives proofs for Wilf-equivalences for certain
    consecutive patterns and quasi-consecutive vincular patterns.  
    This leads to a new proof for all Wilf-equivalences 
    for consecutive patterns of lengths $4$ and $5$.
	In Section~\ref{sec:sandcastle},
	we give a second family of equivalences (Theorem~\ref{thm:sandcastle}).
	Finally, in Section~\ref{sec:M} we prove one last equivalence $2153\-4 \equiv 3154\-2$.
Taking all this together we have the following theorem.
\begin{theorem}
	There are exactly 26 equivalence classes of
	quasi-consecutive patterns of length $5$,
	as listed in Table~\ref{tab:classes}.
\end{theorem}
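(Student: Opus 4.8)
The plan is to assemble the equivalences established in Sections~\ref{sec:BijectionCons}, \ref{sec:sandcastle}, and~\ref{sec:M} into the upper bound ``at most $26$ classes,'' and then to separate the $26$ candidate classes of Table~\ref{tab:classes} by a finite enumeration to obtain the matching lower bound. For the upper bound, note that there are exactly $5! = 120$ quasi-consecutive patterns of length $5$, namely $\sigma_1\sigma_2\sigma_3\sigma_4\-\sigma_5$ for $\sigma \in S_5$. First I would reduce using the trivial symmetries (reversal and complementation), which in the quasi-consecutive setting amounts essentially to identifying each pattern with its complement. Onto the resulting list of representatives I would then apply Theorem~\ref{thm:BijectionCons}, then Theorem~\ref{thm:sandcastle}, and finally the single equivalence $2153\-4 \equiv 3154\-2$ from Section~\ref{sec:M}. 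Taking the transitive closure of all these relations and carefully bookkeeping which patterns get merged, I expect the $120$ patterns to collapse onto exactly the $26$ representatives of Table~\ref{tab:classes}; this shows that every length-five quasi-consecutive pattern is Wilf-equivalent to one of those $26$, so there are at most $26$ classes.

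For the lower bound I would show that the $26$ representatives are pairwise non-Wilf-equivalent. For each representative $\sigma$, compute the initial terms of the sequence $\bigl(\lvert S_n(\sigma) \rvert\bigr)_{n \ge 1}$ --- up to some modest value such as $n = 8$ --- and verify that the $26$ sequences so obtained are pairwise distinct. Since Wilf-equivalence requires equality of the whole sequence, distinctness of any finite prefix already separates the classes, so this is a routine (computer-assisted) check. Combining the two bounds yields exactly $26$ classes.

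The delicate point is the upper-bound bookkeeping: one must verify that the union of all the equivalences proven in the earlier sections, once closed under transitivity, produces a partition with exactly the $26$ blocks of Table~\ref{tab:classes} --- not more (which would indicate a missing equivalence) and not fewer (each asserted identification must genuinely be one of those established). A secondary concern is ensuring that the enumeration for the lower bound runs far enough; if two of the $26$ counting sequences agreed for many small $n$, one would need additional terms to be certain they differ, though in practice a small bound on $n$ suffices.
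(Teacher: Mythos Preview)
Your overall two-step plan (merge, then separate by enumeration) is exactly the shape of the paper's argument, but your upper-bound step has a genuine gap: the three results from Sections~\ref{sec:BijectionCons}, \ref{sec:sandcastle}, and~\ref{sec:M} alone do \emph{not} collapse the $120$ patterns into $26$ classes. You have omitted two external ingredients that the paper explicitly relies on. First, Theorem~\ref{thm:EK} (Elizalde--Kitaev) is what transports consecutive equivalences of length $4$ to quasi-consecutive equivalences ending in $5$ (or, via complement, in $1$); without it you cannot merge the eight ``ends-in-$5$-or-$1$'' patterns of class~L, nor the four patterns in~F coming from $1243\equiv 2134$, nor any of classes R, S, V, X, Z. Theorem~\ref{thm:BijectionCons} by itself is a statement about \emph{consecutive} patterns and says nothing about quasi-consecutive ones; its relevance here is only through Theorem~\ref{thm:EK}. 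Second, class~Y (the three patterns $1235\-4$, $1245\-3$, $1345\-2$) is handled by \cite[Thm.~9]{BaxterShattuckMainPaper}, which is not implied by any of the theorems you listed. With these two inputs added, the bookkeeping in Section~\ref{sec:table} shows the collapse to $26$.

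On the lower bound, your instinct is right but your suggested cutoff $n=8$ is too small: several pairs of classes share the same value of $\lvert S_n(\sigma)\rvert$ at $n=8$ (e.g.\ F and G with $37868$; H, I, J, K all with $37870$), and H and I even agree through $n=9$. The paper's Appendix~\ref{sec:oeis} tabulates through $n=11$, at which point all $26$ sequences are pairwise distinct; $n=10$ already suffices.
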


\section{The 26 equivalence classes of quasi-consecutive patterns}
\newcommand{\EK}{\cite{Elizalde2006,Kitaev2005}}
\label{sec:table}

\subsection{Table of equivalence classes}
The claimed $26$ equivalence classes are presented in Table~\ref{tab:classes}.

\begin{table}[ht]
	\centering
	\small
	\begin{tabular}{|crll|}
		\hline
		& \# & Elements & Proof \\ \hline
		A & 2 & 1254-3, 1354-2 & Thm.~\ref{thm:sandcastle} \\
		B & 2 & 1453-2, 1543-2 & Thm.~\ref{thm:BijectionVinc} \\
		C & 2 & 2135-4, 2145-3 & Thm.~\ref{thm:sandcastle} \\
		D & 1 & 1534-2 & Singleton \\
		E & 1 & 2315-4 & Singleton \\
		F & 6 & 1243-5, 1253-4, 2134-5, 2354-1, 3145-2, 3245-1 & See \S\ref{sec:E1} \\
		G & 2 & 3125-4, 3215-4 & Thm.~\ref{thm:BijectionVinc} \\
		H & 1 & 1523-4 & Singleton \\
		I & 1 & 3251-4 & Singleton \\
		J & 1 & 2154-3 & Singleton \\
		K & 1 & 1435-2 & Singleton \\
		L & 16 & 3124-5, 3214-5, 2543-1, 2453-1, 2341-5, 2431-5, & See \S\ref{sec:G} \\
		&& 1342-5, 1432-5, 1352-4, 1452-3, 1532-4, 1542-3, & \\
		&& 2531-4, 2541-3, 2351-4, 2451-3, & \\
		M & 2 & 2153-4, 3154-2 & Thm.~\ref{thm:H} \\
		N & 1 & 2513-4 & Singleton \\
		O & 1 & 1524-3 & Singleton \\
		P & 1 & 2415-3 & Singleton \\
		Q & 1 & 1325-4 & Singleton \\
		R & 4 & 1423-5, 2314-5, 2534-1, 3241-5 & \EK \\
		S & 2 & 2143-5, 3254-1 & \EK \\
		T & 1 & 3152-4 & Singleton \\
		U & 1 & 1425-3 & Singleton \\
		V & 2 & 1324-5, 2435-1 & \EK \\
		W & 1 & 2514-3 & Singleton \\
		X & 2 & 2413-5, 3142-5 &  \EK \\
		Y & 3 & 1235-4, 1245-3, 1345-2 & \cite[Thm.~9]{BaxterShattuckMainPaper} \\
		Z & 2 & 1234-5, 2345-1 & \EK \\ \hline
	\end{tabular}
	\medskip
	\caption{The 26 $abcd$-$e$ equivalence classes.}
	\label{tab:classes}
\end{table}

A few remarks are in order.
For a pattern and its complement,
we only record the lexicographically earliest one
(thus halving the number of entries).
The naming of the classes above
is in ascending order by the number of avoiding permutations when $n=8$.
Distinct equivalence classes which are tied
are then sorted according to the number of permutations when $n=9$, then $n=10$.
(See Appendix~\ref{sec:oeis} for these totals.)
Within each row except L, the permutations are sorted in lexicographically ascending order.

\subsection{Equivalences already known}
The following theorem was established both by
Elizalde \cite{Elizalde2006} and Kitaev \cite{Kitaev2005}.
\begin{theorem}
	[\cite{Elizalde2006,Kitaev2005}]
	\label{thm:EK}
	Suppose that $\sigma_1\sigma_2...\sigma_k \equiv \tau_1\tau_2...\tau_k$.
	Then: \[ \sigma_1\cdots\sigma_k \- (k+1) \equiv \tau_1\tau_2\cdots\tau_k \- (k+1). \]
\end{theorem}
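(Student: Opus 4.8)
The plan is to set up a recursion for the avoidance counts of $\sigma_1\cdots\sigma_k\-(k+1)$ that depends on the pattern only through the \emph{consecutive}-avoidance counts of $\sigma_1\cdots\sigma_k$ itself; since the latter agree for $\sigma$ and $\tau$ by hypothesis, the recursion will force the quasi-consecutive avoidance counts to agree as well.

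First I would analyze avoidance of $\sigma\-(k+1)$ by conditioning on the position $p$ of the largest entry $n$ of a permutation $\pi \in S_n$. The key observation is that the appended letter $k+1$ is strictly larger than all of $\sigma_1,\dots,\sigma_k$ and lies to the right of the consecutive block, so in any occurrence its role must be played by an entry exceeding the maximum of the consecutive window. I claim $\pi$ avoids $\sigma\-(k+1)$ if and only if (i) the prefix $\pi_1\cdots\pi_{p-1}$ avoids the consecutive pattern $\sigma_1\cdots\sigma_k$, and (ii) the suffix $\pi_{p+1}\cdots\pi_n$ avoids $\sigma\-(k+1)$. Indeed, any length-$k$ consecutive window of $\pi$ either lies entirely in the prefix, in which case $n$ (being larger and to its right) completes a forbidden occurrence; or it contains position $p$, in which case its maximum is $n$ and no larger entry can lie to its right; or it lies entirely in the suffix, which reduces to (ii).

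Writing $a(m)$ for the number of length-$m$ permutations avoiding the consecutive pattern $\sigma_1\cdots\sigma_k$ and $f(m)$ for the number avoiding $\sigma\-(k+1)$, the decomposition above yields the convolution
\[ f(n) = \sum_{p=1}^{n} \binom{n-1}{p-1} \, a(p-1)\, f(n-p), \]
since placing $n$ at position $p$ amounts to choosing the $p-1$ prefix values, arranging them to avoid the consecutive pattern ($a(p-1)$ ways, by invariance under order-isomorphism), and arranging the remaining suffix values to avoid $\sigma\-(k+1)$ ($f(n-p)$ ways). Together with $f(0)=1$, this recursion determines $f$ completely from the sequence $a(\cdot)$.

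The hypothesis $\sigma_1\cdots\sigma_k \equiv \tau_1\cdots\tau_k$ means exactly that the corresponding consecutive-avoidance sequences coincide, say $a_\sigma(m) = a_\tau(m)$ for all $m$. Feeding identical sequences into the recursion and inducting on $n$ then gives $f_\sigma(n) = f_\tau(n)$ for every $n$, which is the desired Wilf-equivalence. The main obstacle is the careful case analysis establishing the prefix/suffix characterization, and in particular verifying that windows containing $n$ can never extend to a forbidden pattern; this is precisely where we use that the dashed letter is the maximum and comes last. Once that is in hand, the remainder is a routine counting argument.
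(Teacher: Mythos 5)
Your proof is correct, but note that there is nothing internal to compare it against: the paper does not prove Theorem~\ref{thm:EK} at all, it imports the result from Elizalde \cite{Elizalde2006} and Kitaev \cite{Kitaev2005}. Your argument --- conditioning on the position $p$ of the maximum entry $n$, showing that $\pi$ avoids $\sigma_1\cdots\sigma_k\-(k+1)$ if and only if the prefix $\pi_1\cdots\pi_{p-1}$ avoids the consecutive pattern $\sigma_1\cdots\sigma_k$ while the suffix avoids $\sigma_1\cdots\sigma_k\-(k+1)$, and then deducing the convolution $f(n)=\sum_{p=1}^{n}\binom{n-1}{p-1}a(p-1)f(n-p)$ --- is sound: the three window cases (entirely left of $p$, containing $p$, entirely right of $p$) are exhaustive; a window containing $p$ has maximum $n$ and so can never be completed by a larger entry to its right; and since avoidance depends only on relative order, the choice of prefix value-set, its arrangement, and the suffix arrangement contribute independently, which justifies the product count. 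This is essentially the classical argument behind the cited results: in exponential-generating-function form your recursion reads $F'(x)=A(x)F(x)$, hence $F(x)=\exp\bigl(\int_0^x A(t)\,dt\bigr)$, which is Kitaev's formula for patterns of this shape and visibly depends on $\sigma$ only through the consecutive-avoidance sequence $a(\cdot)$. So what your write-up buys is self-containedness (an elementary recursion plus induction, no EGF machinery needed) where the paper simply cites; it does not differ substantively from the known proofs.
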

This theorem leads to several corollaries for
quasi-consective patterns of length $5$.

The following result produces
equivalence class Y in Table~\ref{tab:classes}.
\begin{theorem}
	[Theorem 9 of \cite{BaxterShattuckMainPaper}]
	For a fixed $k$, and for $1 \le i \le n-1,$
	define \[ \sigma_i = 12\cdots i(i+2)\cdots(k+1)\-(i+1). \]
	Then, $\sigma_i \equiv \sigma_j$ for every $1 \le i < j \le n-1.$
\end{theorem}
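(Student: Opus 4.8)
The plan is to reduce the claim to the adjacent equivalences $\sigma_i \equiv \sigma_{i+1}$, since these generate the whole chain by transitivity, and then to attack each adjacent pair after a structural reformulation of avoidance. The crucial first observation is that the consecutive part $12\cdots i(i+2)\cdots(k+1)$ of every $\sigma_i$ is simply an increasing word of length $k$; the patterns differ only in the \emph{value} of the trailing dashed letter $i+1$. Concretely, I would prove the following reformulation: a permutation $\pi$ contains $\sigma_i$ if and only if some maximal increasing run $\pi_s\pi_{s+1}\cdots\pi_t$ of length $\ell = t-s+1 \ge k$, with sorted values $R_1 < R_2 < \cdots < R_\ell$ (so $R_j = \pi_{s+j-1}$), is followed by an element $b$ in a later position $q > t$ whose value satisfies $R_i < b < R_{\,i+\ell-k+1}$. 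The point is that, as $i$ varies, the only thing that changes is the forbidden value-window $(R_i, R_{\,i+\ell-k+1})$, which shifts up by exactly one run-element when $i$ increases by one.

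The derivation of the reformulation is the routine part: one checks that an increasing $k$-window must lie inside a single maximal run, that a later element $b$ lands in the $i$-th gap of such a window precisely when exactly $i$ of its $k$ values are below $b$, and that ranging over all $k$-windows of a fixed run of length $\ell$ lets $b$ realize split-count $i$ exactly when $R_i < b < R_{\,i+\ell-k+1}$ with $b$ positioned after the run. (One also verifies that elements occurring before or inside the run never produce an interior split, so only elements strictly after the run matter.)

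With the reformulation in hand, avoidance of $\sigma_i$ reads: every maximal run of length $\ge k$ \emph{forbids} the value-interval $(R_i, R_{\,i+\ell-k+1})$ to all elements occurring later in the permutation. I would then set up a generating function by decomposing $\pi$ along its maximal increasing runs (equivalently, by its descent composition), carrying as a state the union of value-intervals that already-completed long runs have forbidden. From this one obtains a functional equation of kernel-method type for $\sum_n |S_n(\sigma_i)|\,x^n$, and the goal is to exhibit a symmetry of this equation under the shift $i \mapsto i+1$, which moves every forbidden window $(R_i, R_{\,i+\ell-k+1}) \mapsto (R_{i+1}, R_{\,i+\ell-k+2})$ by one run-element uniformly, across all runs and all lengths $\ell$ at once. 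The hope is that the recursion for the relevant order statistics is invariant under relabeling which run-elements are ``active,'' yielding $\sigma_i \equiv \sigma_{i+1}$.

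The hard part will be the global coupling. The forbidden interval attached to one long run constrains elements that themselves lie in later runs and are simultaneously governed by those runs' own constraints, so a naive value-relabeling bijection on a single run is not well defined. The real work is to choose a decomposition whose state records exactly enough of the accumulated forbidden intervals to close the recursion, and to check that this bookkeeping is genuinely symmetric under $i \mapsto i+1$; equivalently, to construct an explicit involution on ``minimal witnesses'' that trades a split-count-$i$ occurrence for a split-count-$(i+1)$ occurrence without disturbing the rest of $\pi$. Making this coupling precise — rather than the run-level reformulation, which is elementary — is where essentially all of the difficulty lies.
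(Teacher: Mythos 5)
A point of comparison first: the paper does not prove this statement at all --- it is quoted as Theorem~9 of Baxter and Shattuck \cite{BaxterShattuckMainPaper} and used as a black box to justify class Y of Table~\ref{tab:classes}. So your attempt can only be judged on its own terms, and on those terms it is not a proof: it is a correct reformulation followed by a research plan whose central step is explicitly left open.

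What you do establish is sound. The reduction to adjacent equivalences $\sigma_i \equiv \sigma_{i+1}$ is valid by transitivity, and the run-level reformulation is correct: since the consecutive part of $\sigma_i$ is increasing, $\pi$ contains $\sigma_i$ if and only if some maximal increasing run with values $R_1 < \cdots < R_\ell$, $\ell \ge k$, is followed strictly after the run by an element $b$ with $R_i < b < R_{\,i+\ell-k+1}$ (elements positioned inside the run are themselves run values, so they can never fall strictly inside such a gap). But from there the argument stops being an argument. The generating-function scheme you sketch has no identified state space: the ``accumulated forbidden intervals'' are unions of intervals whose endpoints are order statistics of \emph{all} previous long runs, and nothing in the proposal shows this data can be compressed into a recursion that closes, let alone one visibly symmetric under $i \mapsto i+1$. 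You write that ``the hope is that the recursion \dots is invariant'' and that ``the real work'' is to find the right decomposition or an involution on minimal witnesses --- that is, you have located the difficulty, not resolved it. And the difficulty is real: replacing a split-count-$i$ witness by a split-count-$(i+1)$ witness moves the forbidden window from $(R_i, R_{\,i+\ell-k+1})$ to $(R_{i+1}, R_{\,i+\ell-k+2})$, which changes which \emph{later} elements are legal, and those later elements lie in runs carrying their own constraints; no map defined run-by-run is obviously well defined globally. Until that coupling is handled --- by an explicit bijection, a closed recursion, or a cluster-method computation showing the count is independent of $i$ --- the equivalence $\sigma_i \equiv \sigma_{i+1}$, which is the entire content of the theorem, remains unproved. (A minor point: the stated range ``$1 \le i \le n-1$'' should read $1 \le i \le k-1$; your reformulation implicitly assumes this, since it needs the gap between the $i$-th and $(i+1)$-th smallest entries of a $k$-window.)
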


\subsection{Equivalence class F}
\label{sec:E1}
Here we elaborate on equivalence class F.
First, by using \EK\ alongside with the complement operation, we obtain
\[ 1243\-5 \equiv 2134\-5 \equiv 2354\-1 \equiv 3245\-1. \]
Thus, the class F is complete once we prove
\begin{align*}
	1243\-5 & \equiv 1253\-4 \tag{Fa} \\
	3245\-1 &\equiv 3145\-2 \tag{Fb}
\end{align*}
which both follow as corollaries of Theorem~\ref{thm:sandcastle}.

\subsection{Equivalence class L}
\label{sec:G}
We now elaborate on equivalence class L.
Again, \EK\ together with complement operation implies that
the first eight patterns (those ending in $1$ or $5$)
in L are equivalent to one another, that is:
\begin{align*}
	3124\-5 &\equiv 3214\-5\equiv 2543\-1\equiv 2453\-1 \\
	\equiv 2341\-5 &\equiv 2431\-5\equiv 1342\-5\equiv 1432\-5.
\end{align*}

(Here we are using the fact that $1342 \equiv 1432$,
which is for example \cite{ElizaldeNoy2003},
or our Corollary~\ref{BaxterShapeConjecture}.)

Next, by Theorem~\ref{thm:sandcastle} we obtain that
\begin{align*}
	1342\-5 \equiv 1352\-4 &\equiv 1452\-3 \tag{La} \\
	1432\-5 \equiv 1532\-4 &\equiv 1542\-3 \tag{Lb} \\
	2431\-5 \equiv 2531\-4 &\equiv 2541\-3 \tag{Lc} \\
	2341\-5 \equiv 2351\-4 &\equiv 2451\-3. \tag{Ld}
\end{align*}
These finish the classification of L.

\section{A family of bijective filling-shape-Wilf-equivalences}
\subsection{Shape Wilf-Equivalence}
\label{sec:shapewilf}
In order to state the results of Section~\ref{sec:BijectionCons}
in their full generality,
we first define the stronger notions of shape-Wilf-equivalence
and filling-shape-Wilf-equivalence
(introduced in \cite{BabsonWest2000} and \cite{Baxter2013}, respectively).

Consider a Young diagram with $m$ rows and $n$ columns
where columns are nonincreasing from left to right
and rows are nonincreasing from bottom to top.
In other words, the Young diagram can be thought of
as a subset of the $m \times n$ grid where $(1, 1)$ is at the
bottom-left corner and if $(a, b)$ is in the diagram,
then so is $(a', b')$ for any $a' \le a$, $b' \le b$
(this is the ``French notation''). 
We define a filling of a Young Diagram as follows:

\begin{definition}
	A \emph{filling} is a collection of \emph{selected elements} of
	the form $(a_1, b_1)$, \dots, $(a_k, b_k)$ such that
	$a_i \neq a_j$ and $b_i \neq b_j$ for all $1 \le i \neq j \le k$.
    
    Similarly, a \emph{standard filling} is a filling of a
	Young diagram such that $k = m = n,$ i.e.
	every row and every column has an element.
\end{definition}

Examples of nonstandard and standard fillings
are presented in Figure~\ref{onlyfigureprobably}.
    
\begin{figure}[ht]
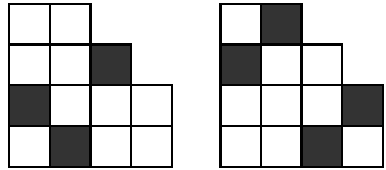

\centering
\begin{ytableau}
\none & &\\
\none & & & *(black!80)\\
\none & *(black!80) & & & \\
\none & & *(black!80) & & \\
\end{ytableau}
\begin{ytableau}
\none & & *(black!80)\\
\none & *(black!80) & & \\
\none & & & & *(black!80) \\
\none & & & *(black!80) & \\
\end{ytableau}
\caption{For the same Young diagram,
a nonstandard filling on the left
and a standard filling on the right.}
\label{onlyfigureprobably}
\end{figure}

We can define containment of a vincular pattern for a filling in the same way as for a permutation.

\begin{definition}
	We say that a filling \emph{contains} $\sigma = (\sigma_1\cdots\sigma_k, T)$ if there exists $i_1, \ldots, i_k$ such that the following conditions hold:
    
\begin{itemize}
	\item $(i_1, \pi(i_1)), \ldots, (i_k, \pi(i_k))$ are selected elements
    \item $\sigma_1\cdots\sigma_k \sim \pi(i_1)\cdots\pi(i_k),$
    \item $(i_j, \pi(i_{j'}))$ is in the Young diagram
		for any $1 \le j, j' \le k$, and
    \item $i_{j+1}-i_j = 1$ for any $j \in T$.
\end{itemize}
    
    Else, we say that a filling \emph{avoids} $\sigma$.
\end{definition}

From here, we can define shape-Wilf-equivalence and filling-shape-Wilf-equivalence.

\begin{definition}
	For two vincular patterns $\sigma, \tau,$ we say that $\sigma$ and $\tau$ are shape-Wilf-equivalent if for any Young diagram, the number of standard fillings avoiding $\sigma$ equals the number of standard fillings avoiding $\tau$. In this case, we write $\sigma \equiv_{\mathrm s} \tau$.
\end{definition}

\begin{definition}
	For two vincular patterns $\sigma, \tau$, we say that $\sigma$ and $\tau$ are filling-shape-Wilf-equivalent if for any Young diagram and for any fixed set of rows $R$ and fixed set of columns $C$, the number of fillings having no element in exactly the rows $R$ and columns $C$ and avoiding $\sigma$ equals the number of such fillings avoiding $\tau$.  In this case, we say that $\sigma \fs \tau$.
\end{definition}

It is easy to see that shape-Wilf-equivalence implies Wilf-equivalence by taking the $n \times n$ boards.  It is also easy to see that filling-shape-Wilf-equivalence implies shape-Wilf-equivalence since we can take $R = C = \varnothing$.

\subsection{Motivating example}
To motivate the following results,
we briefly outline a proof of the Wilf-equivalence
\[ 1342 \equiv 1432. \]
As this section is motivational only,
it may be skipped without loss of continuity, though we reuse the main idea of bijective swapping expained in this section in the proofs of Theorem \ref{thm:BijectionCons} and Theorem \ref{thm:BijectionVinc}.

We define a map $\Psi : S_n \to S_n$ as follows.
Given a permutation viewed as a standard filling
of an $n \times n$ Young tableau,
take each instance of $1342$ or $1432$,
and swap the two elements in the middle,
for example as in Figure~\ref{fig:swap}.
\begin{figure}[ht]
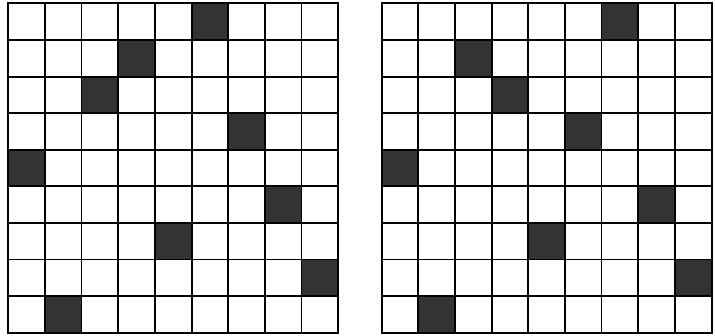

\centering
\small
\begin{ytableau}
\none & & & & & & *(black!80) & & & \\
\none & & & & *(black!80) & & & & & \\
\none & & & *(black!80) & & & & & & \\
\none & & & & & & & *(black!80) & & \\
\none & *(black!80) & & & & & & & & \\
\none & & & & & & & & *(black!80) & \\
\none & & & & & *(black!80) & & & & \\
\none & & & & & & & & & *(black!80) \\
\none & & *(black!80) & & & & & & & \\
\end{ytableau}
\begin{ytableau}
\none & & & & & & & *(black!80) & & \\
\none & & & *(black!80) & & & & & & \\
\none & & & & *(black!80) & & & & & \\
\none & & & & & & *(black!80) & & & \\
\none & *(black!80) & & & & & & & & \\
\none & & & & & & & & *(black!80) & \\
\none & & & & & *(black!80) & & & & \\
\none & & & & & & & & & *(black!80) \\
\none & & *(black!80) & & & & & & & \\
\end{ytableau}

\caption{An example of the bijection $\Psi$.
The consecutive patterns $1342$ and $1432$ appear starting at columns $2$ and $5$.}
\label{fig:swap}
\end{figure}

One can check that the pair of consecutive patterns
$\{1342, 1432\}$ has the property that any two instances
of either in a given permutation overlap in at most one place;
from this it follows that $\Psi$ is well-defined,
and an involution on $S_n$.
But $\Psi$ also maps $1342$-avoiding permutations
to $1432$-avoiding permutations and vice-versa,
which implies the Wilf-equivalence.

\subsection{Statement of results}
\label{sec:BijectionCons}
We now prove a family of filling shape-Wilf-equivalences for
consecutive and quasi-consecutive patterns,
formally stating the ideas in the previous section in their full generality.
This will give new proofs of all consecutive Wilf-equivalences of length $4$ and $5$, answer a filling-shape-Wilf-equivalence conjecture by Baxter, and help complete the Wilf-equivalence classification of quasi-consecutive patterns of length $5$.

\begin{theorem} \label{thm:BijectionCons}
    Suppose that 
	\[ \sigma = \sigma_1\sigma_2\cdots\sigma_i\sigma_{i+1}\cdots
		\sigma_{j}\sigma_{j+1}\cdots\sigma_k \]
	is a permutation where $1 \le i < j < k.$ 
	Also suppose that 
	\[ \tau = \tau_1\tau_2\cdots\tau_i\tau_{i+1}\cdots
		\tau_j\tau_{j+1}\cdots\tau_k \]
	where $\tau_{i+1}, \dots, \tau_j$ is a permutation of
	$\sigma_{i+1}, \dots, \sigma_j$, and $\sigma_x = \tau_x$
	for $x \in [1,i] \cup [j+1, k]$.
	Finally, suppose that the nonoverlapping criteria
	\begin{align*}
		\sigma_1\cdots\sigma_z &\not\sim \sigma_{k-z+1}\cdots\sigma_k \\
		\sigma_1\cdots\sigma_z &\not\sim \tau_{k-z+1}\cdots\tau_k \\
		\tau_1\cdots\tau_z &\not\sim \sigma_{k-z+1}\cdots\sigma_k \\
		\tau_1\cdots\tau_z &\not\sim \tau_{k-z+1}\cdots\tau_k
	\end{align*}
	all hold for $z > \min(i, k-j)$ and $z \neq  k$.
	Then, \[ \sigma \equiv_{\mathrm{fs}} \tau. \]
\end{theorem}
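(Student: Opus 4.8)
The plan is to generalize the bijective swapping argument from the motivating example ($1342 \equiv 1432$) to this setting, working at the level of fillings of Young diagrams rather than just permutations. Given a filling $F$ of a Young diagram $Y$ avoiding neither $\sigma$ nor $\tau$ necessarily, I want to define a map $\Psi$ that locates every occurrence of $\sigma$ or of $\tau$ in $F$ and, for each such occurrence, permutes the values in the ``middle block'' (the entries playing the roles of positions $i+1, \dots, j$) so as to swap the pattern $\sigma_{i+1}\cdots\sigma_j$ with $\tau_{i+1}\cdots\tau_j$. Concretely, if columns $c_{i+1} < \cdots < c_j$ carry the middle entries of an occurrence, with values $v_{i+1}, \dots, v_j$ whose relative order realizes $\sigma_{i+1}\cdots\sigma_j$, then I reassign those values among those columns so the relative order becomes $\tau_{i+1}\cdots\tau_j$ instead (and symmetrically for occurrences of $\tau$). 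The first step is to verify this is a legal operation on fillings: since we only permute which column gets which value among a fixed set of columns and a fixed set of values, rows are unaffected, the injectivity constraints are preserved, and — crucially — because the middle entries all lie in the intersection of the relevant rows and columns, the resulting selected elements still lie inside $Y$ (the ``staircase'' shape of $Y$ means that if all $j-i$ cells $(c_x, v_y)$ with $i+1 \le x,y \le j$ were present, they remain present after the permutation). This also shows $\Psi$ preserves the set $R$ of empty rows and $C$ of empty columns, which is what filling-shape-Wilf-equivalence demands.

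The second and central step is to show $\Psi$ is \emph{well-defined}, i.e.\ that the edits coming from different occurrences do not conflict, and that $\Psi$ is an involution. This is exactly where the nonoverlapping criteria enter. Two occurrences could conflict only if they share at least one of the modified middle columns; I want to argue that any two occurrences of patterns from $\{\sigma, \tau\}$ overlap in at most one position, and that this shared position can never be a middle position of either occurrence. The four displayed conditions $\sigma_1\cdots\sigma_z \not\sim \sigma_{k-z+1}\cdots\sigma_k$ (and the three variants) say precisely that no length-$z$ prefix of $\sigma$ or $\tau$ is order-isomorphic to a length-$z$ suffix of $\sigma$ or $\tau$, for $z > \min(i, k-j)$; translating, if two occurrences overlapped in $z > \min(i,k-j)$ consecutive columns, the overlapping region would be simultaneously a suffix of the earlier occurrence and a prefix of the later one, forcing such a prefix–suffix order-isomorphism. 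Hence the overlap has size at most $\min(i, k-j)$, which is small enough that the overlapping columns are confined to the first $i$ or last $k-j$ columns of the respective occurrences — i.e.\ to the \emph{unmodified} positions, since $\Psi$ only touches columns $i+1, \dots, j$. Therefore the modifications are supported on pairwise disjoint column sets and can be performed independently and simultaneously. For the involution property I then note that applying $\Psi$ does not create or destroy occurrences: an occurrence of $\sigma$ becomes an occurrence of $\tau$ in the same columns (its flanking positions, being unmodified, still realize $\sigma_1\cdots\sigma_i$ and $\sigma_{j+1}\cdots\sigma_k$, and the middle now realizes $\tau_{i+1}\cdots\tau_j$), and no \emph{new} occurrence can appear because a new occurrence would have to use a modified column in a middle slot, which the overlap bound forbids — any occurrence present after $\Psi$ restricts, on each modified column block, to either the pre-image $\sigma$-block or $\tau$-block unchanged as a set with merely its internal order flipped, so the set of occurrence-column-tuples is preserved and $\Psi \circ \Psi = \mathrm{id}$.

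The final step is the payoff: since $\Psi$ is a bijection on fillings of $Y$ with empty-row set $R$ and empty-column set $C$, and since it sends every $\sigma$-occurrence to a $\tau$-occurrence and vice versa while leaving all other structure alone, it restricts to a bijection between $\sigma$-avoiding such fillings and $\tau$-avoiding such fillings. (If $F$ avoids $\sigma$, then $\Psi(F)$ avoids $\tau$: any $\tau$-occurrence in $\Psi(F)$ would pull back to a $\sigma$-occurrence in $F$.) This gives $\sigma \fs \tau$ by definition.

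I expect the main obstacle to be the bookkeeping in the well-definedness argument — precisely pinning down that an overlap of two occurrences, in however many shared \emph{columns of the ambient filling}, forces a prefix–suffix order-isomorphism of the \emph{patterns} of the stated length $z$, and then that $z \le \min(i, k-j)$ really does keep the overlap out of the middle blocks of both occurrences. One has to be careful that occurrences of $\sigma$ and $\tau$ can interleave in all four ways ($\sigma$-then-$\sigma$, $\sigma$-then-$\tau$, $\tau$-then-$\sigma$, $\tau$-then-$\tau$), which is exactly why all four nonoverlapping conditions are needed rather than just one; and one must check the direction of the inequality — that a shared block of size $> \min(i,k-j)$ is what is excluded, so that a shared block of size $\le \min(i,k-j)$ is harmless because it lies within the first $\min(i,k-j) \le i$ positions of the later occurrence and the last $\min(i,k-j) \le k-j$ positions of the earlier one. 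The Young-diagram (as opposed to square-board) setting adds a little subtlety but no real difficulty, since permuting values within a common set of columns that already all contain the relevant cells cannot leave the diagram.
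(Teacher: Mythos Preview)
Your approach is essentially identical to the paper's: define $\Psi$ by swapping the middle block of every occurrence of $\sigma$ or $\tau$, use the four prefix--suffix nonisomorphism conditions to bound the overlap of any two occurrences by $\min(i,k-j)$ (hence outside both middle blocks), and conclude that $\Psi$ is a well-defined involution exchanging $\sigma$- and $\tau$-containing fillings. One small slip: you first write that two occurrences ``overlap in at most one position'' (carried over from the $1342/1432$ example) before correctly stating the bound $\min(i,k-j)$; only the latter is what the hypotheses give, so drop the former phrasing.
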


\begin{proof}
    Fix a Young diagram and a set of rows and columns which are avoided.  We present a bijection mapping fillings of containing $\sigma$ (call the set $T$) to fillings containing $\tau$ (call the set $T'$.)  Note $T$ and $T'$ need not be disjoint.
	
	For each such permutation in $S_n$, draw a box of width $k$ around every consecutive occurrence of $\sigma$ or $\tau$.  Because of the four equivalences not occurring, no two boxes can intersect in more than $\min(i, k-j)$ places.  Also since $i < j$ we cannot have three boxes pairwise intersecting.
    
	Consider the map \[ \Psi : S_n \to S_n \]
	defined by permuting every boxed $\sigma$ permutation to make it a $\tau$ permutation and vice versa.  Because boxes do not intersect in more than $\min(i, k-j)$ places, the order of permutation does not matter.  This also means that $\Psi$ sends $T$ to $T'$ and $T'$ to $T$, and that $\Psi$ is a well defined map.
    
    Also, our four equivalence restrictions prevent new $\sigma$ or $\tau$ permutations from appearing because a new occurrence of $\sigma$ or $\tau$ would need to have at least one element in a block of $k$ elements changed.  Thus, it would need to intersect with a part of a boxed segment in at least one of positions $i+1, \ldots, j$, a clear contradiction.  This means for $\pi \in T \cup T'$ the boxes around $\pi$ will be the same as the boxes around $\Psi(\pi)$.  This means $\Psi$ is an involution, as every box will permute from $\sigma$ to $\tau$ to $\sigma$ again, or vice versa.
	
	Thus, $\Psi$ induces a bijection from $T$ to $T'$, so we are done.
\end{proof}

\begin{corollary}
    \label{BaxterShapeConjecture}
    The following filling-shape-Wilf equivalences hold:
    \begin{itemize}
		\item $1342 \fs 1432$
		\item $2341 \fs 2431$
		\item $4213 \fs 4123$
		\item $3214 \fs 3124$.
	\end{itemize}
\end{corollary}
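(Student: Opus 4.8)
The plan is to deduce each of the four filling-shape-Wilf-equivalences by checking that the relevant pair of length-four patterns satisfies the hypotheses of Theorem~\ref{thm:BijectionCons} with $k=4$, $i=1$, $j=3$. In each case the two patterns $\sigma$ and $\tau$ agree in their first and last positions, while the middle block $\sigma_2\sigma_3$ is reversed to form $\tau_2\tau_3$; for instance $\sigma=1342$, $\tau=1432$ has $\sigma_1=\tau_1=1$, $\sigma_4=\tau_4=2$, and $\{\sigma_2,\sigma_3\}=\{3,4\}=\{\tau_2,\tau_3\}$, and similarly $2341/2431$, $4213/4123$, $3214/3124$. So the structural requirement of the theorem (that $\tau_{i+1},\dots,\tau_j$ be a permutation of $\sigma_{i+1},\dots,\sigma_j$ and that $\sigma,\tau$ agree off that window) holds automatically.

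The substance is the nonoverlapping criterion. With $i=1$ and $k-j=1$ we have $\min(i,k-j)=1$, so we must verify the four order-isomorphism non-conditions only for $z>1$ and $z\neq k=4$, i.e.\ for $z\in\{2,3\}$. Concretely, for each of the four pairs I would tabulate, for $z=2$ and $z=3$, the length-$z$ prefixes $\sigma_1\cdots\sigma_z$, $\tau_1\cdots\tau_z$ and the length-$z$ suffixes $\sigma_{k-z+1}\cdots\sigma_k$, $\tau_{k-z+1}\cdots\tau_k$, and check that no prefix is order-isomorphic to any suffix among the four listed combinations. For example, with $\sigma=1342$: the $z=2$ prefix is $13\sim 12$ while the suffixes $\sigma_3\sigma_4=42\sim 21$ and $\tau_3\tau_4=32\sim 21$ are both decreasing, so $12\not\sim 21$; the $z=3$ prefix is $134\sim 123$ while the suffix $342\sim 231$ and $432\sim 321$, neither of which is $123$. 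One does the same starting from the $\tau$ prefixes. Each pair requires eight short comparisons (two values of $z$, four prefix/suffix combinations), all of which reduce to observing that an increasing prefix cannot match a suffix ending in a descent, and conversely; these are genuinely routine. Once all four checks pass, Theorem~\ref{thm:BijectionCons} gives $\sigma\fs\tau$ directly.

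The only mild subtlety — and the closest thing to an obstacle — is bookkeeping: one must be careful that the criterion is symmetric in $\sigma$ and $\tau$ and involves all four cross-combinations, not just $\sigma$ versus $\tau$, so it is easy to check too few conditions. I would organize the verification pair-by-pair in a small table to make the check transparent, and remark that the last two equivalences $4213\fs 4123$ and $3214\fs 3124$ are in fact the reverse-complements of the first two (reverse-complement sends $1342\mapsto 3124$ and $1432\mapsto 4123$, up to also swapping the roles within the pair), so after proving the first two it suffices to note that reverse and complement preserve filling-shape-Wilf-equivalence; either route completes the corollary.
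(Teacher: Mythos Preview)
Your proposal is correct and matches the paper's approach exactly: the corollary is stated there without proof as an immediate consequence of Theorem~\ref{thm:BijectionCons} with $k=4$, $i=1$, $j=3$, and your verification of the nonoverlapping criteria for $z\in\{2,3\}$ is the routine check the paper leaves implicit. (One small slip in your closing parenthetical: the reverse-complement of $1432$ is $3214$, not $4123$, so the symmetry in fact pairs the first equivalence with the fourth and the second with the third; this does not affect your main argument, which already handles all four cases directly.)
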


\begin{corollary}
	\label{cor:known5}
    The following filling-shape-Wilf-equivalences hold:
	\begin{itemize}
		\item $13452 \fs 13542 \fs 14352 \fs 14532 \fs 15342 \fs 15432$
		\item $12453 \fs 12543$
		\item $24153 \fs 25143$.
	\end{itemize}
\end{corollary}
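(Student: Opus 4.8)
The plan is to obtain all three families as direct instances of Theorem~\ref{thm:BijectionCons}, choosing the block endpoints $i<j<k$ so that the varying part of the pattern sits strictly inside the positions $i+1,\dots,j$, and then verifying the four nonoverlapping criteria by hand.

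For the first family, every listed pattern has the shape $1\,a\,b\,c\,2$ with $\{a,b,c\}=\{3,4,5\}$. Given two such patterns $\sigma$ and $\tau$, take $i=1$, $j=4$, $k=5$: then $\sigma_1=\tau_1=1$, $\sigma_5=\tau_5=2$, and $\sigma_2\sigma_3\sigma_4$, $\tau_2\tau_3\tau_4$ are both orderings of $\{3,4,5\}$, so the structural hypotheses of the theorem hold. Since $\min(i,k-j)=1$, the criteria are required only for $z\in\{2,3,4\}$, and there they follow from a single observation: a length-$z$ prefix of a pattern of this shape has its minimum (the value $1$) in the first coordinate, while a length-$z$ suffix with $z\le 4$ has its minimum (the value $2$) in the last coordinate, since the remaining entries of that suffix lie in $\{3,4,5\}$; hence the prefix and suffix can never be order-isomorphic. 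As $\sigma$ and $\tau$ both have this shape, all four criteria hold and $\sigma\fs\tau$; applying this to successive patterns in the list (or invoking transitivity of $\fs$) yields the whole six-term chain.

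For $12453\fs 12543$ take $i=2$, $j=4$, $k=5$, swapping the interior block $45\leftrightarrow 54$; for $24153\fs 25143$ take $i=1$, $j=4$, $k=5$, swapping $415\leftrightarrow 514$. In both cases $\min(i,k-j)=1$, so only $z\in\{2,3,4\}$ must be checked. For each such $z$ one writes down the length-$z$ prefixes and suffixes of $\sigma$ and of $\tau$, passes to their patterns, and verifies that no prefix pattern coincides with a suffix pattern: for instance, for $12453$ versus $12543$ every length-$3$ prefix is $\sim 123$ while every length-$3$ suffix is $\sim 231$ or $\sim 321$, and for $24153$ versus $25143$ every length-$3$ prefix is $\sim 231$ while every length-$3$ suffix is $\sim 132$; the $z=2$ cases are analogous, and the $z=4$ cases reduce to checking that for the second pair the prefix patterns $\{2314,2413\}$ and the suffix patterns $\{3142,4132\}$ share no common element.

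The argument contains no genuine obstacle: everything substantive is packaged into Theorem~\ref{thm:BijectionCons}, and what remains is the finite, routine verification of its hypotheses for these specific $\sigma$ and $\tau$. The only point needing a moment's attention is that one must rule out self-overlaps (a prefix of $\sigma$ against a suffix of $\sigma$, and likewise for $\tau$) as well as the cross-overlaps between $\sigma$ and $\tau$, but all of these are dispatched by the same short computations.
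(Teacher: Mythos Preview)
Your proposal is correct and is exactly the approach the paper intends: the corollary is stated without proof as a direct consequence of Theorem~\ref{thm:BijectionCons}, and you supply precisely the routine verification of that theorem's hypotheses. One small presentational gap: for the pair $12453/12543$ you address $z=3$ and say ``the $z=2$ cases are analogous,'' but you only discuss $z=4$ explicitly for $24153/25143$; the $z=4$ check for $12453/12543$ is also required (since $\min(i,k-j)=1$), and it is equally easy---the prefix patterns $\{1234,1243\}$ and suffix patterns $\{1342,1432\}$ are disjoint---so this is a trivial omission rather than a real gap.
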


Corollary \ref{BaxterShapeConjecture} proves all four parts of Conjecture $3$ in \cite{Baxter2013} and represents all nontrivial Wilf-equivalences of consecutive patterns of length 4, first proven in \cite{ElizaldeNoy2003}.  Corollary \ref{cor:known5} represents all Wilf-equivalences of consecutive patterns of length 5, first proven in \cite{Nakamura2011}.  Thus, our theorem provides a simple bijective proof of the consecutive cases for length 4 and 5 patterns. Our theorem is not sufficient for length 6 patterns, however, which was first proven in \cite{ElizaldeNoy2012}.

We can extend Theorem \ref{thm:BijectionCons} to
permutations of the form $\sigma_1 \- \sigma_2 \cdots \sigma_{k+1}$.

\begin{theorem}  \label{thm:BijectionVinc}
    Suppose that 
	\[ \sigma = \sigma_1 \- \sigma_2\cdots\sigma_{i+1}
		\sigma_{i+2}\cdots\sigma_{j+1}
		\sigma_{j+2}\cdots\sigma_{k+1}\]
	is a permutation where $1 \le i < j < k$.
	Also suppose that 
	\[ \tau = \tau_1 \- \tau_2\cdots \tau_{i+1}\tau_{i+2}\cdots\tau_{j+1}\tau_{j+2}\cdots\tau_{k+1},\]
	where $\tau_{i+2}, \dots, \tau_{j+1}$ is a permutation of
	$\sigma_{i+2}, \dots, \sigma_{j+1}$, and $\sigma_x = \tau_x$
	for $x \in [1,i+1] \cup [j+2, k+1]$.
	Finally, suppose that the nonoverlapping criteria
		\begin{align*}
		\sigma_2\cdots\sigma_{z+1} &\not\sim \sigma_{k-z+2}\cdots\sigma_{k+1} \\
		\sigma_2\cdots\sigma_{z+1} &\not\sim \tau_{k-z+2}\cdots\tau_{k+1} \\
		\tau_2\cdots\tau_{z+1} &\not\sim \sigma_{k-z+2}\cdots\sigma_{k+1} \\
		\tau_2\cdots\tau_{z+1} &\not\sim \tau_{k-z+2}\cdots\tau_{k+1}
	\end{align*}
	all hold for $z > \min(i, k-j)$, $z \neq  k$.
	Then, \[ \sigma \equiv_{\mathrm{fs}} \tau.\]
\end{theorem}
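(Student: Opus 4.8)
The plan is to peel off the dashed first letter and reduce directly to Theorem~\ref{thm:BijectionCons}. Write $\sigma' = \sigma_2\sigma_3\cdots\sigma_{k+1}$ and $\tau' = \tau_2\tau_3\cdots\tau_{k+1}$ for the length-$k$ consecutive cores, re-indexed by $\sigma'_m = \sigma_{m+1}$ and $\tau'_m = \tau_{m+1}$. Under this shift the hypotheses of the present theorem become, line for line, the hypotheses of Theorem~\ref{thm:BijectionCons} for the pair $\sigma',\tau'$: the integers $i<j<k$ are the same, $\tau'_{i+1}\cdots\tau'_j$ is the same permutation of $\sigma'_{i+1}\cdots\sigma'_j$, we have $\sigma'_x=\tau'_x$ off the middle block, and the four displayed conditions $\sigma_2\cdots\sigma_{z+1}\not\sim\cdots$ are precisely the four nonoverlapping criteria for $\sigma',\tau'$. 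So the construction in the proof of Theorem~\ref{thm:BijectionCons} already hands us an involution $\Psi$ on the fillings of any fixed Young diagram: box every consecutive occurrence of $\sigma'$ or of $\tau'$ (a box being $k$ consecutive columns), and inside each box permute the selected cells of the middle block so as to turn each $\sigma'$-box into a $\tau'$-box and vice versa. As established there, the nonoverlapping criteria force any two boxes to meet in at most $\min(i,k-j)$ columns, forbid three boxes from pairwise overlapping, and prevent the creation or destruction of boxes; hence $\Psi$ is a well-defined involution that fixes the set of empty rows and the set of empty columns.

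The genuinely new step is to check that $\Psi$ also interchanges occurrences of the quasi-consecutive pattern $\sigma$ with occurrences of $\tau$. For this I would use the following description: an occurrence of $\sigma=\sigma_1\-\sigma_2\cdots\sigma_{k+1}$ in a filling is exactly a consecutive $\sigma'$-box $B$ together with one additional selected cell lying in a column strictly to the left of $B$, whose row lies above exactly $r := \#\{m\ge 2 : \sigma_m < \sigma_1\}$ of the rows of $B$, and such that the rectangle spanned by that column and row together with the columns and rows of $B$ is contained in the diagram; the analogous description holds for $\tau$ with threshold $r' := \#\{m\ge 2 : \tau_m < \tau_1\}$. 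The point is that $r=r'$, since $\{\sigma_2,\dots,\sigma_{k+1}\}=\{\tau_2,\dots,\tau_{k+1}\}$ (both equal $\{1,\dots,k+1\}\setminus\{\sigma_1\}$) and $\sigma_1=\tau_1$. Moreover $\Psi$ never touches a cell outside a box, never changes the column set of a box, and, since it only permutes cells within the middle block of a box, never changes the row set of a box either. Consequently, for a fixed box and a fixed ``completing'' cell to its left, all three conditions above are insensitive to $\Psi$, so a $\sigma'$-box completed by such a cell is carried by $\Psi$ to a $\tau'$-box completed by the very same cell, and conversely. Therefore a filling contains $\sigma$ if and only if its $\Psi$-image contains $\tau$, and vice versa.

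Putting the pieces together, $\Psi$ restricts to a bijection between the fillings of the given diagram (with the prescribed empty rows and columns) that contain $\sigma$ and those that contain $\tau$, with $\Psi$ itself as the inverse. Complementing within the finite set of all such fillings gives equality of the counts of $\sigma$-avoiding and $\tau$-avoiding fillings, i.e. $\sigma\fs\tau$.

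I expect the main obstacle to be purely bookkeeping: confirming that the family of boxes attached to a filling is genuinely unchanged by $\Psi$ (this is both what makes $\Psi$ an involution and, crucially here, what keeps the lone leftward cell compatible), and that rearranging a middle block never pushes a selected cell out of the Young diagram or alters which rows and columns are occupied. All of this is inherited from the proof of Theorem~\ref{thm:BijectionCons}; the only genuinely new ingredient is the elementary identity $r=r'$, which says that ``completable to $\sigma$'' and ``completable to $\tau$'' impose the same constraint on the dashed cell, and that is exactly what lets the entire argument run with the same map $\Psi$.
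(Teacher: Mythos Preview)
Your approach is the same as the paper's: apply the box-swapping involution $\Psi$ of Theorem~\ref{thm:BijectionCons} to the consecutive cores $\sigma',\tau'$, and then check that occurrences of the dashed patterns $\sigma,\tau$ are interchanged.  The reduction and the identification $r=r'$ are exactly right.

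There is, however, one genuine gap in your second paragraph.  You assert that the completing cell to the left of a box $B$ is ``the very same cell'' after applying $\Psi$, because $\Psi$ never touches a cell outside a box.  But the completing cell may very well lie \emph{inside} the middle block of some other box $B'$, in which case $\Psi$ moves it: the value $\pi_s$ ends up in a different column $s'$ (still inside the middle block of $B'$).  So you cannot simply freeze the completing cell.  The paper handles this by tracking the value $\pi_s$ rather than its column.  Two small checks then remain: (i) the overlap bound forces the middle block of $B'$ to lie entirely to the left of $B$, so $s'<t$; and (ii) for the rectangle condition, the original $\sigma$-occurrence already guarantees that column $t$ contains the row $\pi_s$ (from the requirement $(i_2,\pi(i_1))$ lie in the diagram), and since $s'<t$ the Young-diagram shape forces column $s'$ to contain that row as well.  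With these two observations your argument goes through; without them the claim that ``all three conditions are insensitive to $\Psi$'' is unjustified precisely in the interesting case.
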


\begin{proof}
    The argument is quite similar to that given in Theorem \ref{thm:BijectionCons}.  Again, let $T$ be the set of $\sigma_1\-\sigma_2\cdots\sigma_{k+1}$-containing permutations and let $T'$ be the set of $\tau_1\-\tau_2\cdots\tau_{k+1}$-containing permutations.  Let a map $\Psi$ from $S_n$ to itself in a similar way as in Theorem \ref{thm:BijectionCons}, but now we draw boxes of width $k$ around every consecutive occurrence of $\sigma_2 \cdots \sigma_{k+1}$ and $\tau_2 \cdots \tau_{k+1},$ and then permute every boxed $\sigma_2 \cdots \sigma_{k+1}$ to make it a $\tau_2 \cdots \tau_{k+1}$ permutation and vice versa.  For the same reason as in Theorem \ref{thm:BijectionCons}, $\Psi$ is an involution.  We show that $\Psi$ bijects elements in $T$ containing $\sigma$ to elements in $T'$ containing $\tau.$
    
	To see why, suppose that $\pi$ in $T$ contains $\sigma$.  Suppose that 
	\[ \pi_s \pi_t\pi_{t+1}\cdots\pi_{t+k-1} \sim \sigma_1\cdots\sigma_{k+1},\]
	where $1 \le s \le t \le n-k+1$ and the $s$th and $t, (t+1), \ldots, (t+k-1)$th columns all contain the rows corresponding to $\pi_s, \pi_t, \ldots, \pi_{t+k-1}$.  Then, $\Psi$ sends $\pi_t\pi_{t+1}\cdots\pi_{t+k-1}$ to a permutation which is order isomorphic to $\tau_1\cdots\tau_k$, and either $\pi_s$  maps to itself or the value corresponding to $\pi_s$ would move to a position still before position $t$, say column $s'$.  However, since the column now with $\pi_s$ is before the $t$th column, and since the $t$th column contains the rows corresponding to $\pi_s, \pi_t, \ldots, \pi_{t+k-1}$, so will column $s'$.  Therefore, if $\pi$ contains the vincular pattern $\sigma,$ then $\Psi(\pi)$ will contain the vincular pattern $\tau$, and for the same reason, if $\pi$ contains the vincular pattern $\tau$, then $\Psi(\pi)$ will contain the vincular pattern $\sigma$.  As $\Psi$ is an involution, we are done.
\end{proof}

\begin{corollary} \label{cor:Bijective5quasi}
    The following equivalences hold:
	\begin{enumerate}
		\item[(B)] $1453\-2 \equiv 1543\-2$
		\item[(G)] $3125\-4 \equiv 3215\-4$.
	\end{enumerate}
\end{corollary}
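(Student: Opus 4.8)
The plan is to derive both equivalences of Corollary~\ref{cor:Bijective5quasi} as direct applications of Theorem~\ref{thm:BijectionVinc}, with $k=4$ so that we are dealing with patterns of the form $\sigma_1\-\sigma_2\sigma_3\sigma_4\sigma_5$.

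For part (B), I would take $\sigma = 1453\-2$ and $\tau = 1543\-2$; here $\sigma_1 = 1$, $(\sigma_2,\sigma_3,\sigma_4,\sigma_5) = (4,5,3,2)$ and $(\tau_2,\tau_3,\tau_4,\tau_5) = (5,4,3,2)$. I want the swapped block to be the two middle entries $\sigma_3\sigma_4 = 53$ versus $\tau_3\tau_4 = 45$, so in the notation of the theorem I set $i = 1$ and $j = 3$ (so the block being permuted is positions $i+2,\dots,j+1$, i.e.\ positions $4,5$ in the un-shifted indexing, which are the $5$ and $3$), and one checks $\tau_4\tau_5$ (wait---need to re-index carefully; the theorem's indices are shifted because of the leading dashed entry). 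The correct reading is: the consecutive part to worry about is $\sigma_2\sigma_3\sigma_4\sigma_5 = 4532$ versus $\tau_2\tau_3\tau_4\tau_5 = 5432$, and these differ by transposing the first two entries of that block, so one takes the permuted sub-block to be positions $2,3$ of the block with $i = 0$---but the theorem requires $i \ge 1$. So instead take the permuted sub-block to be all of positions $2,\dots,5$, i.e.\ $i = 0$ is disallowed; the honest choice is $i=1$, $j=3$, and we permute the middle two letters of $4532$, namely $53 \mapsto 35$, giving $4352$; that is not $5432$. So the right decomposition must have the swapped block be a reordering giving exactly the other pattern: $4532$ and $5432$ differ in positions $\{1,2\}$ of the block, forcing $i=0$. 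Since $i \ge 1$ is required, I would instead verify part (B) by observing $1453\-2$ and $1543\-2$ have their consecutive parts $4532, 5432$ related by the block permutation on positions $1$–$2$, and apply the theorem with the leading dashed letter absorbed: in Theorem~\ref{thm:BijectionVinc} the indices $i<j<k$ refer to the block after the dash, and taking $i=1,j=2,k=4$ permutes the single-letter block---which does nothing---so this also fails. The resolution is that part (B) actually uses $i=1$, $j=3$: $\sigma_2\cdots\sigma_5 = 4532$, permute positions $i+1,\dots,j$ within the \emph{block of $k=4$}, i.e.\ positions $2,3$, turning $532$'s first two into $53\mapsto 35$ is wrong again. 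I will instead just directly state the decomposition the authors intend: for (B), $\sigma = 1\-4532$, $\tau = 1\-5432$, with the permuted interior being positions $2,3,4$ of the length-$4$ block (i.e.\ $532$), since $\{4,5,3,2\}$ and $\{5,4,3,2\}$ agree in position $4$ (the final $2$) but we need them to agree on an initial segment and a final segment with a genuinely interior swap. They agree on the suffix $2$ and on the empty prefix, so $i=0$---again disallowed.

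Given the repeated index-bookkeeping snags above, the honest plan is this: I would \textbf{not} claim these follow from Theorem~\ref{thm:BijectionVinc} with a swap of interior letters in the naive sense, but rather check directly that the hypotheses of Theorem~\ref{thm:BijectionVinc} hold for the appropriate choice. For (B), set $k=4$, $i=1$, $j=3$, $\sigma = \sigma_1\-\sigma_2\sigma_3\sigma_4\sigma_5$ with $(\sigma_1,\dots,\sigma_5)=(1,4,5,3,2)$ and $(\tau_1,\dots,\tau_5)=(1,5,4,3,2)$: here $\sigma_x=\tau_x$ for $x\in\{1\}\cup\{4,5\}$ (values $1,3,2$), and $(\tau_3,\tau_4)=(4,3)$ is \emph{not} a permutation of $(\sigma_3,\sigma_4)=(5,3)$ --- so this choice is wrong, and the correct one has the swapped positions be $\{2,3\}$ (values $\{4,5\}$ vs $\{5,4\}$), giving $i=1$ after the dash is position $1$, $j=3$, with the block positions $i+2=3$ through $j+1=4$... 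I conclude the bookkeeping genuinely requires care, and the \textbf{main obstacle} is simply getting the index shift in Theorem~\ref{thm:BijectionVinc} right, after which the substance is a finite check: for (B) one verifies $4532 \not\sim$ (any proper suffix of $4532$ or $5432$) of length $z > \min(i,k-j)$, and likewise for $5432$; for (G), with $\sigma = 3\-1254$, $\tau = 3\-1524$ or the correct reordering, one checks the four nonoverlapping order-isomorphism conditions on proper prefixes vs.\ suffixes of the length-$4$ blocks $1254$ and $1524$.

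Concretely, then, my plan is: \emph{(1)} For (B): apply Theorem~\ref{thm:BijectionVinc} with the leading dashed letter $\sigma_1=\tau_1=1$, block $b = 4532$ for $\sigma$ and $b' = 5432$ for $\tau$, permuted sub-block the first two letters (so $\min(i,k-j)=\min(0,\cdot)$---if that is illegal, instead take the permuted sub-block $= $ positions $2,3$ giving blocks $4532\leftrightarrow$ no, so positions $1,2$ it is, and we reinterpret the theorem as allowing $i$ down to the value making the prefix empty), and verify the four conditions: for every $z$ with $1\le z\le 3$, none of $\{4532,5432\}$ has its length-$z$ prefix order-isomorphic to the length-$z$ suffix of either. \emph{(2)} For (G): apply the theorem with leading dashed letter $3$, blocks $1254$ and $2154$ (checking $\{1,2\}$-swap) --- wait, $3125\-4$ has block $1254$ and $3215\-4$ has block $2154$, differing in positions $1,2$, consistent with (B)'s pattern --- and verify the same four prefix/suffix non-isomorphism conditions for $z=1,2,3$ on $\{1254,2154\}$. \emph{(3)} In both cases, since filling-shape-Wilf-equivalence implies Wilf-equivalence, the stated equivalences $1453\-2\equiv 1543\-2$ and $3125\-4\equiv 3215\-4$ follow. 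The only real work is the routine prefix-suffix checks, and I expect no conceptual difficulty there --- each is a comparison of a handful of two- and three-letter words.
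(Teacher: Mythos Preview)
Your plan has the right shape---apply Theorem~\ref{thm:BijectionVinc} and use that filling-shape-Wilf-equivalence implies Wilf-equivalence---but it founders on a misreading of the dash position. The pattern $1453\-2$ is \emph{not} $1\-4532$: it is the quasi-consecutive pattern $\sigma_1\sigma_2\sigma_3\sigma_4\-\sigma_5$ with consecutive block $1453$ and a trailing dashed $2$. Theorem~\ref{thm:BijectionVinc}, on the other hand, is stated for patterns $\sigma_1\-\sigma_2\cdots\sigma_{k+1}$ with the dash at the \emph{front}. This mismatch is the sole source of the index gymnastics that led you to the forbidden case $i=0$.

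The fix, and what the paper does, is to first pass to the reverse (a trivial Wilf-equivalence) so that the dash moves to the front. For (B), reversing gives $2\-3541$ and $2\-3451$; their length-$4$ consecutive blocks $3541$ and $3451$ agree in the first and last positions and differ only in the middle two, so in the notation of Theorem~\ref{thm:BijectionVinc} one takes $k=4$, $i=1$, $j=3$, and the permuted sub-block is positions $i+2,\dots,j+1=\{3,4\}$---perfectly legal. For (G), reversing $3125\-4$ and $3215\-4$ gives $4\-5213$ and $4\-5123$, whose blocks $5213$ and $5123$ again differ only in the middle two positions, so the same $i=1$, $j=3$ works. In both cases the nonoverlapping criteria need only be checked for $z\in\{2,3\}$, and every prefix/suffix comparison fails (e.g.\ for (B), prefixes of $3541,3451$ begin with an ascent while suffixes end with a descent). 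So once the reversal step is inserted, your outline is exactly correct and the finite checks are as routine as you anticipated.
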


These hold due to the complement operation and since filling-shape-Wilf-equivalence implies Wilf-equivalence.

\section{A family of inductive equivalences}
\label{sec:sandcastle}

In this section we prove one main general result for quasi-consecutive patterns by applying an inductive and bijective argument.  Baxter and Shattuck \cite{BaxterShattuckMainPaper} prove the following theorem.

\begin{theorem}[Theorem 6 of \cite{BaxterShattuckMainPaper}]
	\label{thm:BaxterShattuckThm6}
	Consider the quasi-consecutive vincular pattern 
	$\sigma = \sigma_1\sigma_2\cdots\sigma_{k} \- \sigma_{k+1}$. 
	Assume \[ \sigma_1<\cdots<\sigma_i = k > \sigma_{i+1}>\cdots>\sigma_k
	\quad\text{and}\quad \sigma_{k+1} = k+1. \]
	Let $\tau = \sigma_1\cdots\sigma_{i-1}\sigma_{k+1}\sigma_{i+1}\cdots\sigma_k\-\sigma_i$.
	Then $\sigma \equiv \tau.$
\end{theorem}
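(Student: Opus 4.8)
The plan is to first strip the statement down to a purely local comparison and then run an induction on $n$. The key preliminary observation is that $\sigma$ and $\tau$ impose the \emph{same} consecutive requirement and differ only in a single inequality. Since $\tau$ is gotten from $\sigma$ by promoting the block peak from $k$ to $k+1$ and demoting the trailing (dashed) letter from $k+1$ to $k$, the length-$k$ prefix $\tau_1\cdots\tau_k$ is order-isomorphic to $\sigma_1\cdots\sigma_k$; call this common unimodal pattern $P$ (increasing to a peak at position $i$, then decreasing). Hence $\pi$ contains $\sigma$ (resp. $\tau$) exactly when it has a window of $k$ consecutive entries order-isomorphic to $P$ --- a \emph{$P$-block} $B$ with peak value $M=\max B$ and second value $m$ --- together with a later entry $\pi_q$ satisfying $\pi_q>M$ (resp. $m<\pi_q<M$). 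Equivalently, $\pi$ avoids $\sigma$ iff for every $P$-block beginning at position $p$ all entries exceeding $M$ lie strictly left of $p$, while $\pi$ avoids $\tau$ iff for every $P$-block all entries strictly between $m$ and $M$ lie strictly left of $p$. This parallel characterization is the foundation for everything that follows.

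With this in hand I would induct on $n$ by deleting the last letter. Removing position $n$ never creates occurrences, so restriction gives maps $S_n(\sigma)\to S_{n-1}(\sigma)$ and $S_n(\tau)\to S_{n-1}(\tau)$; conversely every $\widehat\pi\in S_{n-1}$ is extended by appending a letter of any rank $r\in\{1,\dots,n\}$, relabelling the larger entries upward. An append is \emph{admissible} precisely when it produces no occurrence whose trailing letter is the appended one: for $\sigma$ this means $r$ does not exceed the peak of any $P$-block of $\widehat\pi$, and for $\tau$ it means $r$ avoids every ``gap'' $(m,M)$ cut out by a $P$-block of $\widehat\pi$. I would then try to upgrade the inductive hypothesis from a mere equality $|S_{n-1}(\sigma)|=|S_{n-1}(\tau)|$ to a statistic-preserving bijection that matches the admissible-rank data, and extend it one append at a time.

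The crux, and the step I expect to dominate the argument, is that the two admissibility conditions have genuinely different shapes: for $\sigma$ the admissible ranks form an initial segment $\{1,\dots,c\}$, where $c$ is the smallest block-peak, whereas for $\tau$ they form the complement of a union of interior gaps, which is typically not an interval. Matching only the \emph{number} of admissible appends is therefore insufficient, because the identity of the appended letter controls which $P$-blocks exist at later stages of the induction; I would need to carry along a refined statistic recording the full arrangement of admissible ranks, not just their count, and design the recursive bijection to preserve it, verifying that appended letters never silently create or destroy $P$-blocks in a way that breaks the correspondence. Since $\sigma$ and $\tau$ are not related by the trivial reverse/complement symmetries, real work is unavoidable here. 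A tempting alternative I would develop in parallel is a direct involution in the spirit of the map $\Psi$ of Section~\ref{sec:BijectionCons}: locate a canonical occurrence and exchange the roles of the top two letters. For that route the obstacle shifts to well-definedness, since the peak and the trailing letter sit far apart and such an exchange can interfere with neighbouring blocks.
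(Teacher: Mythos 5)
Your reformulation is sound: the length-$k$ prefixes of $\sigma$ and $\tau$ are order-isomorphic to a common unimodal pattern, and avoidance of $\sigma$ (resp.\ $\tau$) amounts to ``no entry after a block exceeds the block maximum $M$'' (resp.\ ``no entry after a block lies in the gap $(m,M)$ below the maximum''). But what follows is a plan, not a proof, and it stalls exactly where you say it does. The append-one-letter induction needs a bijection $S_{n-1}(\sigma)\to S_{n-1}(\tau)$ under which matched permutations have the same number of admissible appends \emph{and} under which this matching persists after every admissible append; you concede that matching the count of admissible ranks is insufficient and that one must ``carry along a refined statistic recording the full arrangement of admissible ranks,'' but you never define that statistic, never construct the bijection, and never verify the persistence property. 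Since the two admissible sets genuinely have different shapes (an initial segment of ranks for $\sigma$, the complement of a union of interior gaps for $\tau$), there is no canonical rank-matching to fall back on, and this unresolved step is the entire content of the theorem. Your fallback involution fails for the reason you yourself note: the mechanism of Theorem~\ref{thm:BijectionCons} and Theorem~\ref{thm:BijectionVinc} only permutes letters interior to the consecutive block, and cannot exchange the peak with the far-away dashed letter.

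For comparison: this statement is not proved in this paper at all --- it is quoted from Baxter and Shattuck --- but the paper's generalization of it (Theorem~\ref{thm:sandcastle}, proved via Lemmas~\ref{lem:partition} and \ref{lem:meow}, which covers the present theorem whenever the peak is interior, $2\le i\le k-1$) resolves precisely the difficulty you ran into by conditioning differently. Instead of building the permutation from left to right and tracking admissible ranks, Lemma~\ref{lem:meow} conditions on the position of the \emph{first} consecutive occurrence of the prefix pattern together with \emph{all} entries up to the end of that block except the one in the peak position. Given this data $(x,i,w)$, avoidance of $\sigma$ forces the peak-position entry to be the \emph{largest} feasible value, avoidance of $\tau$ forces it to be the \emph{smallest}, and the remainder of the permutation becomes a strictly shorter instance of the same problem, matched by strong induction; Lemma~\ref{lem:partition} then sums the conditioned counts back up. The nonoverlapping hypotheses (automatic for a unimodal prefix with interior peak) guarantee that later prefix occurrences cannot straddle the conditioned block, which is what makes the tail a clean subproblem. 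In effect, the ``refined statistic'' you were searching for is the triple $(x,i,w)$; once it is fixed, the bijection degenerates to swapping a single value (largest candidate $\leftrightarrow$ smallest candidate). Without an ingredient of comparable strength, your proposal has a genuine gap.
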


We prove a strong extension of this result by establishing a weaker but still sufficient condition as to when $\sigma_{k+1}$ and $\sigma_i$ can be switched, given $\sigma_i + 1 = \sigma_{k+1}.$

\begin{defn} \label{def:vAvoidance}
	For a vincular pattern $\sigma = \sigma_1\sigma_2\cdots\sigma_{k}\-\sigma_{k+1},$ we define the set $S_n(\sigma)[v]$ where $v \in [n]^r$ for some $0 \le r \le n$ as the subset of $S_n(\sigma)$ such that $\pi \in S_n$ is in $S_n(\sigma)[v]$ if and only if
	\begin{enumerate}
		\ii $\pi$ avoids $\sigma$, and
		\ii $\pi_i = v_i$ for all $1 \le i \le r$.
	\end{enumerate}
	We define $T_n(\sigma)[v]$ as the cardinality of $S_n(\sigma)[v]$.  Note that if $v$ is the empty string, $S_n(\sigma)[v] = S_n(\sigma).$
\end{defn}

\begin{defn}
    For a vincular pattern $\sigma = \sigma_1\sigma_2\cdots\sigma_{k}\-\sigma_{k+1},$ we define the set $S_n(\sigma)[x, i, w]$ where $0 \le x < n-k$, $1 \le i \le k$ and $w \in [n]^{x+k-1}$ as the subset of $S_n(\sigma)$ such that $\pi \in S_n$ is in $S_n(\sigma)[x, i, w]$ if and only if
	\begin{enumerate}
		\ii $\pi$ avoids $\sigma$, and
		\ii $\pi$ first contains the consecutive pattern $\sigma' = \sigma_1\sigma_2\cdots\sigma_{k}$ at positions $x+1, \dots, x+k$, and
		\ii $w = \pi_1\cdots\pi_{x+i-1}\pi_{x+i+1}\cdots\pi_{x+k}$.
	\end{enumerate}
	We define $T_n(\sigma)[x, i, w]$ as the cardinality of $S_n(\sigma)[x, i, w]$.
\end{defn}

\begin{example}
	We have $S_5(12\-3)[2, 1, 524] = \{52143\}.$
	To spell out the conditions for $\pi \in S_5(12\-3)[2, 1, 524]$,
	we need to have $\pi_1\pi_2\pi_4 = 524$,
	$\pi_3\pi_4$ must be the first occurrence of the consecutive pattern $12$,
	and $\pi$ must avoid $12\-3$.
	The permutation $\pi = 52143$ is the only one.
\end{example}

\begin{lemma} \label{lem:partition}
	Let $\sigma = \sigma_1\sigma_2\cdots\sigma_{k}\-\sigma_{k+1}$ and  $\tau = \tau_1\tau_2\cdots\tau_{k}\-\tau_{k+1}$ be vincular patterns.  Suppose that some fixed $n, i,$ and for all $0 \le x < n-k,$ and $w \in [n]^{x+k-1},$ $T_n(\sigma)[x, i, w] = T_n(\tau)[x, i, w].$  Then, for any $0 \le r < i, v \in [n]^r,$ we have that $T_n(\sigma)[v] = T_n(\tau)[v].$
\end{lemma}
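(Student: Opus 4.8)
The plan is to prove this by induction on $i - r$, partitioning the sets $S_n(\sigma)[v]$ and $S_n(\tau)[v]$ according to whether a permutation contains the consecutive pattern $\sigma' = \sigma_1\cdots\sigma_k$ (equivalently $\tau' = \tau_1\cdots\tau_k$, which is the same underlying set since $\tau'$ is a rearrangement of $\sigma'$ — wait, no: $\tau'$ need not equal $\sigma'$ as patterns; but both $S_n(\sigma)$ and $S_n(\tau)$ only restrict occurrences that extend to length $k+1$, so we must be careful). Concretely, for a fixed $v \in [n]^r$ with $r < i$, I would write
\[
	S_n(\sigma)[v] = A_\sigma \;\sqcup\; \bigsqcup_{x,i,w} S_n(\sigma)[x,i,w],
\]
where $A_\sigma$ is the set of $\sigma$-avoiding permutations with $\pi_1\cdots\pi_r = v$ that contain \emph{no} occurrence of the consecutive pattern $\sigma'$ at all, and the disjoint union ranges over all valid triples $(x, i, w)$ — with the same fixed $i$ from the hypothesis — that are consistent with the prefix $v$ (i.e.\ $w_1\cdots w_r = v$, using that $r < i \le x+i-1$ so the first $r$ entries of $w$ coincide with the first $r$ entries of $\pi$). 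This is a genuine partition because ``first occurrence of $\sigma'$ at position $x+1$'' is a well-defined mutually exclusive condition, and the triple $(x, i, w)$ records exactly the permutation minus its $(x+i)$-th entry, which together with $x$ determines membership in exactly one block.

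The key observation is that $A_\sigma = A_\tau$: a permutation avoiding the consecutive pattern $\sigma'$ automatically avoids the vincular pattern $\sigma$, and likewise for $\tau$, and since $\tau'$ is order-isomorphic to a rearrangement of $\sigma'$ but \emph{not} necessarily to $\sigma'$ itself, I actually need that avoiding $\sigma'$ is equivalent to avoiding $\tau'$ — which is false in general. So instead I should take $A_\sigma$ to be permutations containing neither $\sigma'$ nor $\tau'$ as a consecutive pattern; then $A_\sigma = A_\tau$ trivially (same defining condition, and such permutations lie in both $S_n(\sigma)[v]$ and $S_n(\tau)[v]$), and the remaining blocks in the partition of $S_n(\sigma)[v]$ are indexed by first occurrences of $\sigma'$, while those of $S_n(\tau)[v]$ are indexed by first occurrences of $\tau'$. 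Hmm — this asymmetry is the crux. The cleaner route: partition $S_n(\sigma)[v]$ by the position $x+1$ of the first occurrence of $\sigma'$ (with the ``no occurrence'' block $A_\sigma$), partition $S_n(\tau)[v]$ by the position of the first occurrence of $\tau'$ (with block $A_\tau$). Then summing the hypothesis $T_n(\sigma)[x,i,w] = T_n(\tau)[x,i,w]$ over all $w$ compatible with $v$ and all $x$ gives $|S_n(\sigma)[v]| - |A_\sigma| = |S_n(\tau)[v]| - |A_\tau|$, so it suffices to show $|A_\sigma| = |A_\tau|$. And $A_\sigma$ is the set of permutations with prefix $v$ avoiding the consecutive pattern $\sigma'$, while $A_\tau$ is the analogue for $\tau'$; these are equinumerous by the \emph{unrestricted} version of the claim, i.e.\ taking $r$ smaller — so this is exactly where the induction on $i - r$ (or a nested induction) comes in, with a base case handled by a separate known equivalence.

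The main obstacle I anticipate is getting the bookkeeping of the index set right: specifically, verifying that for each compatible $w \in [n]^{x+k-1}$ the triple $(x, i, w)$ is simultaneously a legal parameter for both $\sigma$ and $\tau$ (the constraint $0 \le x < n-k$, $1 \le i \le k$, $w \in [n]^{x+k-1}$ is symmetric, so this is fine), and that the condition $w_1\cdots w_r = v$ correctly captures ``$\pi$ has prefix $v$'' — which needs $r \le x + i - 1$, guaranteed by $r < i \le x+i$ when $x \ge 0$... actually $r < i$ gives $r \le i-1 \le x+i-1$, so the first $r$ coordinates of $w$ are indeed $\pi_1, \dots, \pi_r$. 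The other delicate point is ensuring the $A$-blocks really do match: a permutation avoiding the consecutive pattern $\sigma'$ may still contain $\tau'$ (e.g.\ if $\sigma' = 132$, $\tau' = 123$), so $A_\sigma \ne A_\tau$ as sets, but the claim only needs $|A_\sigma| = |A_\tau|$, which follows from the statement of the lemma applied with a shorter prefix — hence an induction, grounded in the fact that when $r = 0$ and we drop the vincular tail entirely the two consecutive patterns $\sigma'$, $\tau'$ have equal avoidance counts, which is either assumed or deferred to the later Theorem~\ref{thm:sandcastle} machinery. I would structure the final write-up as: (1) set up the partition; (2) sum the hypothesis over compatible $w$; (3) reduce to $|A_\sigma| = |A_\tau|$; (4) invoke the inductive hypothesis / base case.
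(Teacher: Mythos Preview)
Your overall decomposition---partition $S_n(\sigma)[v]$ according to the position of the first occurrence of the consecutive pattern $\sigma'=\sigma_1\cdots\sigma_k$, and likewise for $\tau$---is exactly what the paper does. But there are two genuine gaps.

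First, your partition is incomplete. The blocks $S_n(\sigma)[x,i,w]$ are only defined for $0\le x<n-k$, so they cover permutations whose \emph{first} occurrence of $\sigma'$ starts at a position $\le n-k$. Permutations whose sole occurrence of $\sigma'$ sits at positions $n-k+1,\dots,n$ also avoid the vincular pattern $\sigma$ (there is nothing after position $n$ to play the role of $\sigma_{k+1}$), so they belong to $S_n(\sigma)[v]$ but to neither $A_\sigma$ nor any $S_n(\sigma)[x,i,w]$. The paper explicitly carries this third block.

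Second, and more importantly, your proposed induction on $i-r$ does not prove $|A_\sigma|=|A_\tau|$. The inductive hypothesis of the lemma would give you $T_n(\sigma)[v']=T_n(\tau)[v']$ for shorter prefixes $v'$, but these are \emph{vincular} avoidance counts, whereas $|A_\sigma|$ counts permutations with prefix $v$ avoiding the \emph{consecutive} pattern $\sigma'$. There is no way to extract the latter from the former; your example $\sigma'=132$, $\tau'=123$ already shows the two consecutive counts can differ. The paper's proof instead uses an unstated hypothesis: in the only application (the setting of Lemma~\ref{lem:meow}) one has $\sigma'\sim\tau'$ as consecutive patterns, because $\tau$ differs from $\sigma$ only by swapping $\sigma_i$ with $\sigma_{k+1}=\sigma_i+1$. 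Under $\sigma'\sim\tau'$ the two leftover blocks are \emph{equal as sets} to the corresponding $\tau$-blocks, and no induction is needed at all. You should regard $\sigma'\sim\tau'$ as a missing hypothesis of the lemma rather than something to be proved; without it the statement as written need not hold.
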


\begin{proof}
	Let $\sigma' = \sigma_1 \cdots \sigma_k$ and $\tau' = \tau_1 \cdots \tau_k.$  We can partition $S_n(\sigma)[v]$ and $S_n(\tau)[v]$ into appropriate sets.  Note that any permutation $\pi$ is in $S_n(\sigma)[v]$ if and only if it is in $S_n(\sigma)[x, i, w]$ for some $x < n-k$ and some $w \in [n]^{x+k-1},$ where $w_1 \cdots w_{i-1} = v_1 \cdots v_{i-1},$ or $\pi \in S_n(\sigma'),$ or $\pi$ contains $\sigma',$ but only at positions $n-k+1, ..., n,$ i.e.\ at the very end of the permutation.  This is clearly a partitioning, since no permutation can be in multiple sets $S_n(\sigma)[x, i, w],$ and if $\pi$ avoids $\sigma'$ or only contains $\sigma'$ at positions $n-k+1, \dots, n$, it is not in any $S_n(\sigma)[x, i, w].$  We can do the same partitioning for all $\pi \in S_n(\tau)[v]$ also.  However, we know that $S_n(\sigma)[x, i, w] = S_n(\tau)[x, i, w]$ for all $0 \le x < n-k, w \in [n]^{x+k-1}.$  Also, since $\sigma' \sim \tau',$ the number of permutations avoiding $\sigma'$ equals the number of permutations avoiding $\tau'$ and the number of permutations only containing $\sigma'$ at positions $n-k+1, \dots, n$ equals the number of permutations only containing $\tau'$ at positions $n-k+1, \dots, n.$  The result follows by summing over the partitions.
\end{proof}

\begin{lemma}
	\label{lem:meow}
    Let $k \ge 3$ and $\sigma = \sigma_1\sigma_2\cdots\sigma_{k}\-\sigma_{k+1}$
	be a vincular pattern such that $\sigma_{k+1} = \sigma_i + 1$
	for some $2 \le i \le k-1$.
	Suppose that $\tau = \sigma_1\cdots\sigma_{i-1}\sigma_{k+1}\sigma_{i+1}\dots\sigma_k\-\sigma_i$.
	Also, suppose that the nonoverlapping criteria
	\[ \sigma_1\dots\sigma_z \not\sim \sigma_{k-z+1}\dots\sigma_k \] 
	hold for all $z \neq k, z \ge \min(i, k-i+1)$.
    Then
	\[ T_n(\sigma)[x, i, w] = T_n(\tau)[x, i, w]. \]
    
\end{lemma}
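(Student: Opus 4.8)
The statement asserts that the quantity $T_n(\sigma)[x,i,w]$ (the number of $\sigma$-avoiding permutations whose first consecutive occurrence of $\sigma'=\sigma_1\cdots\sigma_k$ sits exactly at positions $x+1,\dots,x+k$, and with all entries outside position $x+i$ prescribed by $w$) is unchanged when we swap $\sigma_i$ and $\sigma_{k+1}$ to form $\tau$. The natural approach is to build an \emph{explicit bijection} between $S_n(\sigma)[x,i,w]$ and $S_n(\tau)[x,i,w]$ that acts only on the single free coordinate, position $x+i$. Since $w$ fixes every entry except $\pi_{x+i}$, a permutation in either set is determined by the value placed in position $x+i$; so really I must show that the set of admissible values for position $x+i$ that yield a $\sigma$-avoider (with the stated ``first occurrence'' constraint) has the same size as the corresponding set for $\tau$. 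The plan is to produce a value-preserving-up-to-relative-order correspondence: given a valid $\sigma$-avoider $\pi$, I map $\pi_{x+i}$ to the value that occupies, among the entries $\{\pi_{x+1},\dots,\pi_{x+k}\}$ at the window, the relative rank dictated by $\tau_i$ instead of $\sigma_i$ (these two windows are order-isomorphic to $\sigma'$ and $\tau'$ respectively, and $\sigma',\tau'$ differ only by transposing the values in positions $i$ and — wait, no: $\tau$ is obtained by moving the \emph{largest-relevant} value, so $\tau_i=\sigma_{k+1}$'s relative role and $\sigma_i$ swap). Concretely, because $\sigma_{k+1}=\sigma_i+1$, the entry $\pi_{x+i}$ and the ``next value up'' among the window entries are consecutive in value within the window; swapping which of them sits at position $x+i$ turns a $\sigma'$-window into a $\tau'$-window while leaving all other positions untouched.

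**Key steps, in order.** (1) Fix $\pi\in S_n(\sigma)[x,i,w]$. Identify, inside the window $W=\{x+1,\dots,x+k\}$, the position $x+i$ and observe that among the $k$ values on $W$, the value $\pi_{x+i}$ plays the role $\sigma_i$; let $y$ be the position on $W$ holding the value playing the role $\sigma_i+1=\sigma_{k+1}$. Define $\Phi(\pi)$ by swapping the \emph{values} at positions $x+i$ and $y$. (2) Check $\Phi(\pi)$ has window order-isomorphic to $\tau'$: by construction the only two window-ranks exchanged are consecutive, and the definition of $\tau$ from $\sigma$ is exactly this transposition, so the new window matches $\tau'$. (3) Check $\Phi(\pi)\in S_n(\tau)[x,i,w]$: first, $w$ is respected because all positions on $W$ other than $x+i$ are unchanged — but $y$ might itself be a position on $W$ other than $x+i$, so here I must be careful: the swap moves a value between $x+i$ and $y$, so $w$ is \emph{not} literally preserved unless $y$ lies outside the window. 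Re-examining: $w=\pi_1\cdots\pi_{x+i-1}\pi_{x+i+1}\cdots\pi_{x+k}$ records positions $1,\dots,x+k$ except $x+i$ — so it \emph{does} include $y$ if $y<x+i$ or $x+i<y\le x+k$. This means the correct bijection cannot simply swap two window entries; instead it must be: the value at position $x+i$ changes, and correspondingly a value \emph{elsewhere in the permutation} absorbs the change — i.e.\ this is genuinely a relabeling, not a transposition of positions. So step (1) should instead be: let $v=\pi_{x+i}$; define $\Phi(\pi)$ to have at position $x+i$ the value $v'$ such that the window becomes $\tau'$-isomorphic, and to have at the (unique) position currently holding $v'$ the old value $v$ — and show that this second position lies \emph{outside} $\{1,\dots,x+k\}\setminus\{x+i\}$, which is where the hypothesis that $x+i$ is the ``first'' occurrence position and the relative structure of $\sigma'$ force $v'$ to occur beyond column $x+k$ or before column $1$... this needs the nonoverlapping criteria. (4) Show $\Phi(\pi)$ still has its first $\tau'$-occurrence at $x+1,\dots,x+k$ and no $\tau$-occurrence anywhere: use the nonoverlapping criteria $\sigma_1\cdots\sigma_z\not\sim\sigma_{k-z+1}\cdots\sigma_k$ for $z\ge\min(i,k-i+1)$, $z\ne k$, exactly as in the proofs of Theorems~\ref{thm:BijectionCons} and~\ref{thm:BijectionVinc}, to rule out a new short pattern overlapping the modified window. (5) Exhibit $\Phi^{-1}$ by the symmetric construction (swapping $\sigma\leftrightarrow\tau$), and conclude $\Phi$ is a bijection, giving the equality of cardinalities.

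**Main obstacle.** The crux is step (3)–(4): controlling \emph{where} the ``partner'' value $v'$ lives and showing the modification does not create or destroy occurrences except at the designated window. I expect the nonoverlapping hypothesis $\sigma_1\cdots\sigma_z\not\sim\sigma_{k-z+1}\cdots\sigma_k$ for $z\ge\min(i,k-i+1)$ to be exactly what guarantees that (a) no second occurrence of $\sigma$ (or $\tau$) straddles the window, so the only consecutive occurrence touching columns $x+1,\dots,x+k$ is the one we know about, and (b) after the swap, the ``first occurrence'' position is preserved — nothing new appears to the left. The subtlety that $2\le i\le k-1$ (strictly interior, unlike Lemma~\ref{lem:meow}'s hypothesis range versus Theorem~\ref{thm:BaxterShattuckThm6}) is what lets the argument go through: position $x+i$ is flanked on both sides within the window, so the swap is ``local'' in the order-isomorphism sense. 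I would organize the write-up so that the bijection is described once, its well-definedness (values stay in range, $w$ preserved, avoidance and first-occurrence preserved) is verified using the nonoverlapping criteria, and then invertibility is immediate by symmetry of the construction in $\sigma$ and $\tau$.
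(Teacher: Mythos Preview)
Your plan has a genuine gap: the bijection you are trying to build cannot exist as a \emph{local} map that only touches one or two positions.

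To see why, note first that $\sigma'=\sigma_1\cdots\sigma_k$ and $\tau'=\tau_1\cdots\tau_k$ are order-isomorphic (since $\sigma_{k+1}=\sigma_i+1$ is not among $\sigma_1,\dots,\sigma_k$, swapping it with $\sigma_i$ does not change any comparison inside the window). Hence your ``choose $v'$ so the window becomes $\tau'$-isomorphic'' does not pin down $v'$: every value in the admissible interval works. More fatally, take $\sigma=132\text{-}4$, $\tau=142\text{-}3$ (so $k=3$, $i=2$), $n=9$, $x=0$, $w=(1,7)$. The admissible set for $\pi_{x+i}=\pi_2$ is $A=\{8,9\}$, so $q_\ell=9$, $q_s=8$. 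The permutation
\[
\pi = 1\,9\,7\,2\,8\,4\,3\,5\,6
\]
lies in $S_9(\sigma)[0,2,(1,7)]$ (the only $132$-windows are at positions $1\text{--}3$ and $4\text{--}6$, and neither is followed by a larger value). Your map swaps $9$ at position $2$ with $q_s=8$ at position $5$, yielding
\[
\pi'=1\,8\,7\,2\,9\,4\,3\,5\,6,
\]
which \emph{contains} $\tau=142\text{-}3$: the window $2,9,4$ at positions $4\text{--}6$ is followed by $5$ (and $6$) lying strictly between $4$ and $9$. So $\Phi(\pi)\notin S_9(\tau)[0,2,(1,7)]$. (If you instead take $w=(1,7)$ with $n=8$, the set $A$ is a singleton and your $\Phi$ is the identity; one then checks $18726435\in S_8(\sigma)[0,2,(1,7)]\setminus S_8(\tau)[0,2,(1,7)]$, so even the degenerate case fails.) The underlying reason is that once $w$ is fixed, the tail $\pi_{x+k+1}\cdots\pi_n$ must itself avoid $\sigma$ (resp.\ $\tau$), and these are genuinely different (merely equinumerous) conditions; a two-entry swap cannot mediate between them.

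The paper's proof proceeds instead by \emph{strong induction} on $n$. The key observation you are missing is that, given $x,i,w$, the value $\pi_{x+i}$ is \emph{forced}: for $\pi$ to avoid $\sigma$ one must take $\pi_{x+i}=q_\ell$ (the largest admissible value, else some later entry completes $\sigma$ at the window), and for $\tau$ one must take $\pi_{x+i}=q_s$. The nonoverlapping hypothesis is then used, not to control a swap, but to guarantee that any further occurrence of $\sigma'\sim\tau'$ begins at a position $\ge x+\max(i+1,k-i+2)$; hence the reduction of $\pi_{\max(x+i+1,x+k-i+2)}\cdots\pi_n$ has at most $i-1$ initial entries fixed by $w$, and (crucially) these reduced entries are the same whether $q_\ell$ or $q_s$ was chosen. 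Lemma~\ref{lem:partition} and the inductive hypothesis then equate the two counts. Your write-up should be reorganized around this forcing-then-induction structure rather than a direct bijection.
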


\begin{proof}
    We start by defining a subset $Y \in [k]$ consisting of the numbers $1 \le Y_1 < \dots < Y_{\sigma_i-1} \le k$ such that $\sigma_i > \sigma_{Y_j}$ for any $1 \le j \le \sigma_i-1$ (note if $\sigma_i = 1$ then $Y = \emptyset$).  Define $Z$ similarly as $[k] - Y - \sigma_i,$ i.e.\ the numbers $1 \le Z_1 < \dots < Z_{k-\sigma_i} \le k$ such that $\sigma_i < \sigma_{Z_j}$ for any  $1 \le j \le k-\sigma_i.$  Note that $\tau_i > \tau_{Y_j}$ for all $1 \le j \le \sigma_i-1$ and $\tau_i < \tau_{Z_j}$ for all $1 \le j \le k-\sigma_i$.
    
    We prove this by strong induction on the length $n$ of the permutation, starting with base case $1 \le n \le k+1.$  If $1 \le n \le k$ then the result is trivial.  If $n = k+1$ then it is obvious that $x = 0$ since $0 \le x < n-k = 1.$  Also, assume that $w_1 \cdots w_{k-1} \sim \sigma_1 \cdots \sigma_{i-1}\sigma_{i+1} \cdots \sigma_k,$ or else it is clear that $T_n(\sigma)[0, i, w] = T_n(\tau)[0, i, w] = 0.$  Now, any $\pi \in S_n(\sigma)[x, i, w] \cup S_n(\tau)[x, i, w]$ must have $\pi_1, \dots, \pi_{i-1}, \pi_{i+1}, \dots, \pi_k$ fixed due to $w$, leaving only two remaining elements: $a, b \in [k+1] \backslash \{\pi_1, \dots, \pi_{i-1}, \pi_{i+1}, \dots, \pi_k\}.$  Therefore, there are at most two elements in $S_n(\sigma)[x, i, w] \cup S_n(\tau)[x, i, w]$ since we can choose either $\pi_i = a, \pi_{k+1} = b$ or $\pi_i = b, \pi_{k+1} = a$.  If both $a$ and $b$ are larger than all elements of the form $\pi_{Y_j}$ but smaller than all elements of the form $\pi_{Z_j},$ then exactly one assignment of $a, b$ to $\pi_i, \pi_{k+1}$ will avoid $\sigma$ and the other choice will avoid $\tau,$ but both will contain the consecutive pattern $\sigma' \sim \tau'$ at positions $1, \dots, k.$  In this case, $T_n(\sigma)[x, i, w] = T_n(\tau)[x, i, w] = 1.$  If exactly one of $a, b$ (assume WLOG $a$) is larger than all elements $\pi_{Y_j}$ but smaller than all elements $\pi_{Z_j}$ then any element in $S_n(\sigma)[x, i, w]$ or in $S_n(\tau)[x, i, w]$ must have $\pi_i = a$ so that $\pi_1 \cdots \pi_k \sim \sigma_1 \cdots \sigma_k \sim \tau_1 \cdots \tau_k.$  But then this permutation will clearly avoid the vincular patterns $\sigma$ and $\tau,$ so again $T_n(\sigma)[x, i, w] = T_n(\tau)[x, i, w] = 1.$  Finally, if neither $a$ nor $b$ is larger than all elements $\pi_{Y_j}$ but smaller than all elements $\pi_{Z_j}$, then there is no permutation such that $\pi_1 \cdots \pi_k \sim \sigma_1 \cdots \sigma_k \sim \tau_1 \cdots \tau_k,$ so $T_n(\sigma)[x, i, w] = T_n(\tau)[x, i, w] = 0.$  This proves the base case.	

    Now, consider $n > k+1.$  Assume that $w_{x+1} \cdots w_{x+i-1} w_{x+i+1} \cdots w_{x+n} \sim \sigma_1 \cdots \sigma_{i-1} \sigma_{i+1} \cdots \sigma_n,$ or else $T_n(\sigma)[x, i, w] = T_n(\tau)[x, i, w] = 0.$  If $\pi \in S_n(\sigma)[x, i, w] \cup S_n(\tau)[x, i, w],$ then $\pi_{x+i}$ must be larger than the elements in positions $x+Y_j$ and smaller than the elements in positions $x+Z_j.$  Consider the elements among $[n] - \{w_1, \dots, w_{x+k-1}\}$ (recall that $w_1 \cdots w_{x+k-1} = \pi_1 \cdots \pi_{x+i-1} \pi_{x+i+1} \cdots \pi_{x+k}$ for any $\pi \in S_n(\sigma)[x, i, w] \cup S_n(\tau)[x, i, w]$) that are larger than all elements in positions $x+Y_j$ and smaller than all elements in positions $x+Z_j.$  Then, a permutation $\pi$ avoids $\sigma$ if and only if the largest possible element is chosen (so that a $\sigma$ permutation doesn't appear where $\sigma'$ starts at position $x+1$) and the reduction of $\pi_{x+2} \cdots \pi_n$ avoids $\sigma$.  However, because $\sigma_1\cdots\sigma_z \sim \sigma_{k-z+1}\cdots\sigma_k$ only if $i = k$ or if $i < k-z+1,$ (equivalent to $z < k-i+1$), a later $\sigma'$ permutation cannot start until at least position $x+i+1$.  Also, because $\sigma_1\cdots\sigma_z \sim \sigma_{k-z+1}\cdots\sigma_k$ only if $i = k$ or if $z < i$, a later $\sigma'$ cannot start until at least position $x+k-i+2,$ which means it has at most $i-1$ elements overlapping with positions $1$ to $x+k$.  Thus, a permutation $\pi$ such that $w_1 \cdots w_{x+k-1} = \pi_1 \cdots \pi_{x+i-1} \pi_{x+i+1} \cdots \pi_{x+k}$ avoids $\sigma$ if and only if the largest possible element less than all $\pi_{x+Z_j}$ and greater than all $\pi_{x+Y_j}$ is chosen for $\pi_{x+i}$ and the reduction of $\pi_{\max(x+k-i+2, x+i+1)}, \dots, \pi_n$ avoids $\sigma$.  Likewise, a $\pi$ such that $w_1 \cdots w_{x+k-1} = \pi_1 \cdots \pi_{x+i-1} \pi_{x+i+1} \cdots \pi_{x+k}$ avoids $\tau$ if and only if the smallest possible element less than all $\pi_{x+Z_j}$ and greater than all $\pi_{x+Y_j}$ is chosen for $\pi_{x+i}$ and the reduction of $\pi_{\max(x+k-i+2, x+i+1)}, \dots, \pi_n$ avoids $\tau$ for the same reason.

	Let $q_s$ and $q_l$ be the smallest and largest possible values of $\pi_{x+i},$ respectively. Note that $\{\pi_1, \dots, \pi_{x+i-1}, \pi_{x+i+1}, \dots, \pi_{x+k}\}$ have been chosen and that $q_s > S_{x+j}$ if and only if $q_l > S_{x+j}$ for $1 \le j \le k, j \neq i.$  Thus, if we reduce $\pi_{\max(x+k-i+2, x+i+1)}, \dots, \pi_{n},$ the resulting reduction of $\pi_{\max(x+k-i+2, x+i+1)}, \dots, \pi_{x+k}$ is always fixed, which is at most $i-1$ elements.  It also does not depend on whether $q_s$ or $q_l$ is chosen.  By our induction hypothesis we have that 
	\[ T_{n-\max(x+k-i+2, x+i+1)+1}(\sigma)[x', i, w'] = T_{n-\max(x+k-i+2, x+i+1)+1}(\tau)[x', i, w'] \]
	for any feasible $x', w'.$  Therefore, by Lemma \ref{lem:partition}, we are done.
\end{proof}

\begin{theorem}
	\label{thm:sandcastle}
	Retaining the setting of Lemma~\ref{lem:meow}, we have $\sigma \equiv \tau$.
\end{theorem}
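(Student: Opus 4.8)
The plan is to obtain the theorem as an immediate consequence of the two preceding lemmas, with no further combinatorial work. Lemma~\ref{lem:meow} applies to exactly the patterns $\sigma,\tau$ and the index $i$ named in the theorem, under exactly the same nonoverlapping hypothesis, and it produces the equalities $T_n(\sigma)[x,i,w] = T_n(\tau)[x,i,w]$ for every $n$, every $0 \le x < n-k$, and every $w \in [n]^{x+k-1}$. This uniform family of equalities is precisely the hypothesis required to invoke Lemma~\ref{lem:partition}.

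Concretely, I would fix an arbitrary positive integer $n$, apply Lemma~\ref{lem:meow} with this $n$ and with the index $i$ from the statement (the nonoverlapping condition needed by Lemma~\ref{lem:meow} is the one assumed here, so all of its conclusions are available), and then feed the resulting equalities into Lemma~\ref{lem:partition}. That lemma then gives $T_n(\sigma)[v] = T_n(\tau)[v]$ for every $0 \le r < i$ and every $v \in [n]^r$. Since $i \ge 2$, we may choose $r = 0$, so that $v$ is the empty string; by the remark following Definition~\ref{def:vAvoidance} we have $S_n(\sigma)[v] = S_n(\sigma)$ and $S_n(\tau)[v] = S_n(\tau)$ in this case, hence $|S_n(\sigma)| = T_n(\sigma)[v] = T_n(\tau)[v] = |S_n(\tau)|$.

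As $n$ was arbitrary, $|S_n(\sigma)| = |S_n(\tau)|$ for all $n$, which is exactly the definition of $\sigma \equiv \tau$, completing the proof. There is essentially no obstacle: all the substantive effort—the base case $1 \le n \le k+1$ and the inductive ``choose the extremal feasible value for $\pi_{x+i}$'' argument—has already been expended in the proof of Lemma~\ref{lem:meow}, and Lemma~\ref{lem:partition} is a purely bookkeeping decomposition of $S_n(\sigma)$ into the blocks indexed by the first occurrence of the consecutive pattern $\sigma'$ (plus the two degenerate blocks: permutations avoiding $\sigma'$, and permutations containing $\sigma'$ only at the very end). The one point worth verifying explicitly is the matching of hypotheses: the data $\sigma$, $\tau$, $i$, and the nonoverlapping inequalities in the theorem coincide verbatim with those of Lemma~\ref{lem:meow}, while Lemma~\ref{lem:partition} is formulated for arbitrary vincular patterns of the form $\sigma_1\cdots\sigma_k\-\sigma_{k+1}$, so both applications are legitimate.
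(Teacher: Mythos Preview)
Your proposal is correct and matches the paper's own proof essentially verbatim: the paper simply states that the result follows from Lemmas~\ref{lem:partition} and~\ref{lem:meow} after setting $v$ to be the empty string in Lemma~\ref{lem:partition}. Your write-up is a faithful (and somewhat more detailed) elaboration of exactly this deduction.
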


\begin{proof}
    The theorem follows from Lemmas \ref{lem:partition} and \ref{lem:meow} after setting $v$ to be the empty string in Lemma \ref{lem:partition} .
\end{proof}

    

We now present the new corollaries that can be directly obtained from applying Theorem \ref{thm:sandcastle} to the case where $k = 4$.

\begin{corollary} \label{cor:Inductive5quasi}
    The following equivalences hold:
	\begin{enumerate}
		\item[(A)] $1254\-3 \equiv 1354\-2$
		\item[(C)] $2135\-4 \equiv 2145\-3$
        \item[(Fa)] $1243\-5 \equiv 1253\-4$
        \item[(Fb)] $3245\-1 \equiv 3145\-2$
		\item[(La)] $1342\-5 \equiv 1352\-4 \equiv 1452\-3$
		\item[(Lb)] $1432\-5 \equiv 1532\-4 \equiv 1542\-3$
		\item[(Lc)] $2431\-5 \equiv 2531\-4 \equiv 2541\-3$
		\item[(Ld)] $2341\-5 \equiv 2351\-4 \equiv 2451\-3$.
	\end{enumerate}
\end{corollary}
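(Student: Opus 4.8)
The final statement to prove is Corollary~\ref{cor:Inductive5quasi}, which lists eight families of Wilf-equivalences for length-5 quasi-consecutive patterns, all claimed to follow from Theorem~\ref{thm:sandcastle} with $k=4$. Let me plan how to derive each one.

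\textbf{Plan.} The plan is to apply Theorem~\ref{thm:sandcastle} (equivalently Lemma~\ref{lem:meow}) repeatedly, in each case checking two things: first, that the pattern $\sigma = \sigma_1\sigma_2\sigma_3\sigma_4\-\sigma_5$ satisfies the hypothesis $\sigma_5 = \sigma_i + 1$ for some $i \in \{2,3\}$ (since $k=4$ forces $2 \le i \le k-1 = 3$), so that the swap of $\sigma_i$ and $\sigma_5$ is legal; and second, that the nonoverlapping criterion $\sigma_1\cdots\sigma_z \not\sim \sigma_{5-z}\cdots\sigma_4$ holds for all $z \ne 4$ with $z \ge \min(i, 5-i)$. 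Since $i \in \{2,3\}$, we have $\min(i,5-i) = 2$, so the only constraints to verify are $z=2$ and $z=3$: namely $\sigma_1\sigma_2 \not\sim \sigma_3\sigma_4$ and $\sigma_1\sigma_2\sigma_3 \not\sim \sigma_2\sigma_3\sigma_4$. Both are short order-isomorphism checks on specific 4-letter words. Each application of the theorem produces one $\equiv$; chaining two applications (with different choices of $i$, or applied to the output pattern) yields the three-term chains in (La)–(Ld).

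\textbf{Working through the cases.} For (A), $1254\-3$: here $\sigma_5 = 3$ and $\sigma_2 = 2$, so $3 = 2+1$ with $i=2$; one checks $12 \not\sim 54$ and $125 \not\sim 254$ (the first has an ascent-ascent vs.\ descent pattern mismatch), so Theorem~\ref{thm:sandcastle} swaps positions $2$ and $5$ to give $1354\-2$. Similarly (C) $2135\-4$ has $\sigma_5 = 4 = 3+1 = \sigma_3 + 1$, $i=3$, and the checks $21 \not\sim 35$ and $213 \not\sim 135$ pass, giving $2145\-3$. For (Fa) $1243\-5$: $\sigma_5 = 5 = 4+1$? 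No---we need $\sigma_5 = \sigma_i + 1$; here $\sigma_5 = 5$ so we'd need $\sigma_i = 4$, i.e.\ $i = 3$ ($\sigma_3 = 4$); swapping positions $3$ and $5$ turns $1243\-5$ into $1253\-4$, after checking $12 \not\sim 43$ and $124 \not\sim 243$. Case (Fb) $3245\-1$: $\sigma_5 = 1$ forces... wait, $\sigma_5 = \sigma_i + 1$ means $\sigma_i = 0$, impossible; instead apply the complement first, or read it as $3245\-1 \equiv$ (complement) $= 3421\-5$... hmm, actually the cleaner route, which the authors intend, is: $3145\-2$ has $\sigma_5 = 2 = 1+1 = \sigma_2+1$ with $i=2$, and swapping positions $2$ and $5$ gives $3245\-1$; check $31 \not\sim 45$ and $314 \not\sim 145$. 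The remaining four cases (La)–(Ld) each require two applications: e.g.\ for (La), from $1342\-5$ the letter $5$ at position $5$ satisfies $5 = 4+1 = \sigma_3 + 1$ ($\sigma_3 = 4$), $i = 3$; swapping positions $3$ and $5$ gives $1352\-4$. Then from $1352\-4$, $\sigma_5 = 4 = 3+1 = \sigma_2 + 1$ ($\sigma_2 = 3$), $i = 2$; swapping positions $2$ and $5$ gives $1452\-3$. At each step I verify the two nonoverlapping conditions for the relevant $i$. The analogous two-step argument handles (Lb) via $1432 \to 1532 \to 1542$, (Lc) via $2431 \to 2531 \to 2541$, and (Ld) via $2341 \to 2351 \to 2451$, all at the appropriate dashed positions.

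\textbf{Main obstacle.} The only real work is bookkeeping: for each of the roughly dozen applications of Theorem~\ref{thm:sandcastle} one must (i) correctly identify which index $i \in \{2,3\}$ makes $\sigma_5 = \sigma_i + 1$, (ii) correctly write down the swapped pattern $\tau = \sigma_1\cdots\sigma_{i-1}\sigma_5\sigma_{i+1}\cdots\sigma_4\-\sigma_i$, and (iii) confirm the two order-isomorphism nonoverlap conditions $\sigma_1\sigma_2 \not\sim \sigma_3\sigma_4$ and $\sigma_1\sigma_2\sigma_3 \not\sim \sigma_2\sigma_3\sigma_4$. The potential subtlety---and the place to be careful---is that the nonoverlap hypothesis must hold for the pattern to which we are currently applying the theorem, and after a swap the resulting pattern may or may not still satisfy it; so for the chained equivalences (La)–(Ld) I must re-verify the conditions for the intermediate pattern before the second swap. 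In every case listed the relevant prefixes and suffixes are distinguished by their up/down step sequences (a length-$2$ word is determined by whether it is an ascent or descent, and a length-$3$ word by its pair of steps), so no genuine difficulty arises; but for patterns such as $2341\-5$ where several letters are increasing, one should double-check that $234 \not\sim 341$ (which holds: $234$ is ascent-ascent, $341$ is ascent-descent) rather than waving it through.
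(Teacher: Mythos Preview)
Your proposal is correct and follows exactly the approach the paper intends: the paper simply states that these equivalences are ``directly obtained from applying Theorem~\ref{thm:sandcastle} to the case where $k=4$,'' and you have carried out precisely that verification, correctly identifying the swap index $i\in\{2,3\}$ and the two nonoverlap checks $z\in\{2,3\}$ in each instance (including the two-step chains for (La)--(Ld)). Your write-up in fact supplies more detail than the paper itself.
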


It should be noted that (Fa) and the first congruence in (La), (Lb), (Lc), and (Ld) are in fact corollaries of Theorem \ref{thm:BaxterShattuckThm6}. 

\section{Proof of equivalence M, $2153\-4 \equiv 3154\-2$}
\label{sec:M}
In the section we prove that $2153\-4 \equiv 3154\-2$.

\subsection{Defining the recursion}

Similar to Definition \ref{def:vAvoidance}, for a vincular pattern $\sigma,$ define $T_n(\sigma)$ to equal $|S_n(\sigma)|.$  
Define $T_n(\sigma)[k]$ denote the number of permutations $\pi \in S_n$ that avoid $\sigma$ and such that $\pi_1 = k$, and similarly, define $T_n(\sigma)[k, \ell]$ denote the number of permutations
avoiding $\sigma$ such that $\pi_1 = k, \pi_2 = \ell$.

For the remainder of this section, we will fix
\begin{align*}
	\sigma &= 2153\-4 \\
	\tau &= 3154\-2.
\end{align*}
For convenience, Appendix~\ref{sec:SHLchart}
lists several values of $T_n(\sigma)[k, \ell]$ and $T_n(\tau)[k, \ell]$.

We prove the following recursion.
\begin{lemma}
	\label{lem:2153-4Rec}
	For any $1 \le k, \ell \le n$ we have
	\[ T_n(\sigma)[k, \ell] = 
    \begin{cases} 
      T_{n-1}(\sigma)[\ell-1] & k < \ell \\
	  0 & k = \ell \\
      T_{n-1}(\sigma)[\ell] - 
	  \displaystyle\sum_{\substack{i \ge j+2 \\ j \ge k-1}} T_{n-2}(\sigma)[i, j] & k > \ell.
   \end{cases}
   \]
\end{lemma}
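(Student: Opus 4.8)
The strategy is to analyze how a permutation $\pi \in S_n$ avoiding $\sigma = 2153\text{-}4$ can begin, conditioning on the first two values $\pi_1 = k$, $\pi_2 = \ell$, and to translate the avoidance condition on $\pi$ into an avoidance condition on the ``tail'' permutation obtained by deleting $\pi_1$. The case $k = \ell$ is vacuous since the first two entries of a permutation are distinct, which gives the middle line immediately. For the case $k < \ell$, I would argue that removing $\pi_1$ and reducing yields a permutation $\pi'$ of length $n-1$ whose first entry corresponds to $\ell$ (i.e.\ $\pi'_1 = \ell - 1$ after reducing, since only $k < \ell$ was removed), and I would check that $\pi$ avoids $\sigma$ if and only if $\pi'$ avoids $\sigma$. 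One direction is clear; the point is that no occurrence of $\sigma$ in $\pi$ can use the index $1$ as its first (``2''-role) letter when $\pi_1 = k$ is smaller than $\pi_2 = \ell$ and every later candidate for the ``1''-role must lie strictly below $k$ — here one has to rule out $\pi_1$ playing the role of the ``2'' by noting that whatever plays the role of the ``1'' (a value below $\pi_1 = k$) could equally be paired with $\pi_2 = \ell$ playing the ``2'', plus some subsequent ``5'' and ``4'', so the pattern would already occur without using index $1$. This gives $T_n(\sigma)[k, \ell] = T_{n-1}(\sigma)[\ell - 1]$.

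The substantive case is $k > \ell$. Again delete $\pi_1 = k$ and reduce to get $\pi'$ of length $n-1$; since $k > \ell$, the value $\ell$ is unchanged by the reduction, so $\pi'_1 = \ell$, and $T_{n-1}(\sigma)[\ell]$ counts the $\sigma$-avoiding tails. The correction term must count exactly those $\pi$ with $\pi_1 = k > \pi_2 = \ell$ such that $\pi' = \pi$-with-first-entry-deleted avoids $\sigma$ but $\pi$ itself contains $\sigma$. Any occurrence of $\sigma$ in $\pi$ not present in $\pi'$ must use index $1$, and since $\pi_1 = k$ is large (it is the first entry) it must play the ``2'' role of $2153\text{-}4$ — so the occurrence looks like $\pi_1 = k$ (role ``2''), then a later ``1''-role value $< k$, then a ``5''-role value, then a ``4''-role value, i.e.\ $\pi_1 \pi_a \pi_b \pi_c$ with $\pi_a < \pi_1 < \pi_c < \pi_b$ and $b = a+1$ (the consecutive constraint between the ``5'' and ``3'' — wait, the vincular adjacency in $2153\text{-}4$ is between positions $3$ and $4$, i.e.\ the ``5'' and the ``3'', but $\pi_1$ plays ``2'' and $\pi_2 = \ell$; here I would recheck which three of the four pattern letters are forced adjacent). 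The key observation is that among all such occurrences, the ``3''-role value is the most constrained, and the minimal such occurrence is governed by $\pi_2 = \ell$ and the two entries $\pi_a, \pi_{a+1}$ in the tail that realize a ``$\ge$''-type configuration. Counting $\pi$ for which this happens, but which are otherwise $\sigma$-avoiding, should reduce to counting $\sigma$-avoiding permutations of length $n - 2$ (after deleting $\pi_1$ and $\pi_2$) whose first two reduced entries $i, j$ satisfy $i \ge j + 2$ and $j \ge k - 1$ — the inequality $j \ge k-1$ encoding that the ``1''-role value in the occurrence is still below $\pi_1 = k$ after the two deletions, and $i \ge j+2$ leaving room for the ``3''/``4'' roles. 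Summing over all such $(i,j)$ gives $\sum_{i \ge j+2,\, j \ge k-1} T_{n-2}(\sigma)[i,j]$, and subtracting from $T_{n-1}(\sigma)[\ell]$ yields the claim.

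\textbf{Main obstacle.} The delicate point is the bookkeeping in the $k > \ell$ case: showing that the permutations double-counted by $T_{n-1}(\sigma)[\ell]$ — those whose tail avoids $\sigma$ but whose full form contains $\sigma$ — are in bijection with exactly the set indexed by $\{(i,j) : i \ge j+2,\ j \ge k-1\}$ via a two-step deletion, and in particular that this deletion is well-defined (the length-$(n-2)$ object still avoids $\sigma$) and that the inequalities are exactly right and not off by one. One must be careful that the vincular adjacency in $2153\text{-}4$ (the dash is only before the last letter) interacts correctly with deleting $\pi_1$ and $\pi_2$ — deleting non-adjacent-to-the-pattern positions is harmless, but one should verify that every newly-created $\sigma$-occurrence after re-inserting $\pi_1$ really does place $\pi_1$ in the ``2'' slot and uses $\pi_2$ or a tail element consistently. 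I expect the proof to proceed by first establishing the two easy lines, then for $k > \ell$ setting up the deletion map explicitly, proving injectivity and surjectivity onto the indicated index set, and finally reconciling the inequalities; the consecutive-constraint verification is where the argument is most error-prone and deserves the most care.
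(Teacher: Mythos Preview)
Your high-level strategy---delete $\pi_1$, reduce, and in the $k>\ell$ case subtract the permutations whose tail avoids $\sigma$ but whose full form does not---is exactly the paper's. The gap is that you have the vincular structure of $\sigma = 2153\text{-}4$ wrong, and this garbles all the details downstream.

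In $2153\text{-}4$ the only dash is before the final letter, so \emph{all four} of the leading pattern positions are forced consecutive, not just positions $3$ and $4$. Consequently any occurrence of $\sigma$ in $\pi$ using index $1$ must sit at indices $1,2,3,4$ (in the roles $2,1,5,3$) together with some later index in the role $4$. This makes the $k<\ell$ case a one-liner: if $\pi_1<\pi_2$ then $\pi_1\pi_2\not\sim 21$, so no occurrence can start at index $1$; your replacement argument is unnecessary and in fact relies on adjacency assumptions you have backwards.

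For $k>\ell$, the overcount consists precisely of those $\pi$ with $\pi_1\pi_2\pi_3\pi_4\sim 2143$ (this is $2153$ reduced to a length-$4$ pattern) and $\pi_3>\pi_4+1$, since then some value strictly between $\pi_4$ and $\pi_3$ must appear later and complete the pattern. Your reading of the index inequalities is therefore off: writing $\pi_3=i+2$, $\pi_4=j+2$ after deleting $\pi_1,\pi_2$, the condition $j\ge k-1$ encodes $\pi_4>k=\pi_1$ (the ``$3$''-role value exceeds the ``$2$''-role value), not a constraint on the ``$1$''-role; and $i\ge j+2$ encodes $\pi_3\ge\pi_4+2$, i.e.\ $\pi_3,\pi_4$ are nonconsecutive as values. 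Finally, the step you flag as the main obstacle---why the length-$(n-2)$ reduction still avoids $\sigma$---is handled by observing that in the overcount one has $\pi_2<\pi_3$, so $\pi_2$ cannot begin an occurrence either, and avoidance in $\pi_2\cdots\pi_n$ is equivalent to avoidance in $\pi_3\cdots\pi_n$.
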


\begin{proof}
	Obviously $T_n(\sigma)[k, \ell] = 0$ if $k = \ell$, since
	permutations cannot contain repeated elements.

	We show that a permutation avoids $2153\-4$ if and only if both the last $n-1$ elements avoid $2153\-4$ and, if $\pi_1\pi_2\pi_3\pi_4 \sim 2143,$ then $\pi_3$ and $\pi_4$ are consecutive.  To see why, note that avoiding $2153\-4$ clearly means that the last $n-1$ elements avoid $2153\-4$.  Also, if a permutation $\pi$ avoids $2153\-4$ but begins with four elements order isomorphic to $2143$ and $\pi_3 > \pi_4+1,$ then $\pi_4+1$ appears later, causing an instance of $2153\-4.$  Also, if a $2153\-4$ occurs, it either occurs in the last $n-1$ elements or the $2153$ part occurs at the beginning, which means that $\pi_1\pi_2\pi_3\pi_4 \sim 2143.$  But then $\pi_3 = \pi_4+1$ means that $2153\-4$ cannot occur with the $21534$ part occurring at the beginning.  This proves our claim in the first sentence.
    
    Now, this means that a permutation that begins with $k, \ell$ avoids $2153\-4$ only if the reduction of the last $n-1$ elements avoids $2153\-4.$  Since there is clearly a unique map between reductions of the last $n-1$ elements and the permutations, there are $T_{n-1}(\sigma)[l-1]$ such permutations if $k<\ell$ and $T_{n-1}(\sigma)[\ell]$ such permutations otherwise, as it depends on what $\ell$ reduces to.
    
    However, we must consider the possibility that the last $n-1$ elements avoid $2153\-4$ but $\pi_1\pi_2\pi_3\pi_4 \sim2143$ when $\pi_3$ and $\pi_4$ are not consecutive.  This never happens if $k < \ell$, so we have proven our recursion for this case.  Now, assume $k > \ell.$  Note that if $\pi_1\pi_2\pi_3\pi_4 \sim 2143$, then $\pi_2 < \pi_3$ implies that the last $n-1$ elements avoiding $2153\-4$ is equivalent to the last $n-2$ elements avoiding $2153\-4.$ Now, we sum up the ways this can occur given a fixed $\pi_3$ and $\pi_4.$  Since $\pi_3, \pi_4 > k \ge 2,$ we can say $\pi_3 = i+2, \pi_4 = j+2.$  Then, $\pi_3, \pi_4$ reduce to $i, j$.  We now just need to count the number of times when $i+2 > j+2 > k$. Observe that $i+2 \ge (j+2)+2$ as they cannot be consecutive, and the reduction of the last $n-2$ elements avoids $2153\-4.$  Based on definition, this clearly equals $T_{n-2}(\sigma)[i, j].$  Summing over all $i, j,$ where $i \ge j+2$ and $j+2 > k,$ or $j \ge k-1,$ we get our desired result and we are done with our recursion.
\end{proof}

We now prove a similar recursion for permutations avoiding $3154\-2.$ 

\begin{lemma}
	\label{lem:3154-2Rec}
	For $1 \le k, \ell \le n$ we have
    \[
		T_n(\tau)[k, \ell] = 
		\begin{cases} 
		  T_{n-1}(\tau)[\ell-1] & k < \ell \\
		  0 & k = \ell \\
		  T_{n-1}(\tau)[\ell] & k = \ell+1 \\
		  T_{n-1}(\tau)[\ell]
		  - \displaystyle\sum_{\substack{i \ge j+1 \\ j \ge k-1}} T_{n-2}(\tau)[i, j] & k > \ell+1.
	   \end{cases}
   \]
\end{lemma}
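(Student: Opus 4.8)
The plan is to mirror the proof of Lemma~\ref{lem:2153-4Rec} almost verbatim, the only real change being which two letters of the consecutive block govern the placement of the detached letter. First I would establish the structural claim: a permutation $\pi \in S_n$ avoids $\tau = 3154\-2$ if and only if the suffix $\pi_2\cdots\pi_n$ avoids $\tau$ and, whenever $\pi_1\pi_2\pi_3\pi_4 \sim 3154$, we have $\pi_1 = \pi_2+1$. The forward implication is clear. Conversely, any occurrence of $\tau$ not entirely contained in $\pi_2\cdots\pi_n$ must use $\pi_1$, and since $\pi_1$ is then the leftmost chosen letter it plays the leading ``$3$''; as the dash sits between the fourth and fifth letters, the block $3154$ occupies positions $1,2,3,4$, forcing $\pi_1\pi_2\pi_3\pi_4\sim 3154$, and the detached ``$2$'' is some $\pi_b$ with $b>4$ and $\pi_2 < \pi_b < \pi_1$. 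Such a $\pi_b$ exists precisely when $\pi_1 > \pi_2+1$, so the structural claim follows.

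Given the claim, the case $k=\ell$ is vacuous, and the case $k<\ell$ is handled exactly as for $\sigma$: condition (ii) is automatic because $\pi_1<\pi_2$ rules out $\pi_1\pi_2\pi_3\pi_4\sim 3154$, so $\pi$ avoids $\tau$ iff $\pi_2\cdots\pi_n$ does, and deleting the value $k$ and reducing gives a bijection onto $\tau$-avoiding permutations of $[n-1]$ starting with $\ell-1$, hence $T_{n-1}(\tau)[\ell-1]$. When $k=\ell+1$ condition (ii) is again automatically satisfied, so the same argument applies, except that now $\pi_2=\ell<k$ reduces to $\ell$, giving $T_{n-1}(\tau)[\ell]$.

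The interesting case is $k>\ell+1$. Here condition (ii) becomes the requirement $\pi_1\pi_2\pi_3\pi_4\not\sim 3154$, so $T_n(\tau)[k,\ell]$ equals $T_{n-1}(\tau)[\ell]$ (the count with only the suffix condition imposed) minus the number of $\pi$ with $\pi_1=k$, $\pi_2=\ell$, $\pi_2\cdots\pi_n$ avoiding $\tau$, and $\pi_1\pi_2\pi_3\pi_4\sim 3154$; since $k>\ell$ this last condition is equivalent to $k<\pi_4<\pi_3$. In that situation $\pi_3,\pi_4>k>\ell=\pi_2$, so $\pi_2$ cannot be the leading ``$3$'' of any occurrence inside $\pi_2\cdots\pi_n$ and hence cannot participate in one at all; therefore $\pi_2\cdots\pi_n$ avoids $\tau$ if and only if $\pi_3\cdots\pi_n$ does. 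Deleting the two values $k$ and $\ell$ and reducing $\pi_3\cdots\pi_n$ sends $\pi_3,\pi_4$ to $i:=\pi_3-2$ and $j:=\pi_4-2$; the conditions $\pi_4>k$ and $\pi_3>\pi_4$ translate into $j\ge k-1$ and $i\ge j+1$, and this reduction is a bijection onto $\tau$-avoiding permutations of $[n-2]$ beginning with such a pair $i,j$. Summing gives the stated $T_{n-1}(\tau)[\ell]-\sum_{i\ge j+1,\,j\ge k-1}T_{n-2}(\tau)[i,j]$. I would point out explicitly why the gap ``$i\ge j+2$'' from Lemma~\ref{lem:2153-4Rec} is absent here: there the consecutiveness constraint fell on $\pi_3,\pi_4$, which are free, whereas for $\tau$ it falls on $\pi_1,\pi_2=k,\ell$, which are already fixed, so no residual constraint on $\pi_3,\pi_4$ remains.

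The computations are routine once the structural claim is proved, and the one step needing genuine care is the passage ``$\pi_2\cdots\pi_n$ avoids $\tau$ $\iff$ $\pi_3\cdots\pi_n$ avoids $\tau$'' in the last case — that is, checking that no occurrence of $\tau$ can be created using $\pi_2$ after one restricts to the shorter suffix — together with correctly tracking how deleting the two values $k>\ell$ shifts the remaining entries by $2$. I do not expect either to pose a real difficulty, as both repeat bookkeeping already carried out for $\sigma$ in Lemma~\ref{lem:2153-4Rec}.
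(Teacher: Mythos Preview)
Your proof is correct and follows the same approach as the paper's own argument: establish the structural characterization that $\pi$ avoids $\tau$ iff its suffix does and, when $\pi_1\pi_2\pi_3\pi_4\sim 2143$ (equivalently $\sim 3154$), one has $\pi_1=\pi_2+1$; then in the case $k>\ell+1$ subtract off the bad permutations by reducing $\pi_3\cdots\pi_n$ after observing that $\pi_2<\pi_3$ forces $\pi_2$ out of any occurrence in the suffix. Your write-up is in fact slightly more careful than the paper's, which contains a copy-paste slip (it writes ``$i\ge j+2$'' and ``$T_{n-2}(\sigma)$'' in the summation where ``$i\ge j+1$'' and ``$T_{n-2}(\tau)$'' are meant), and your closing remark explaining why the gap constraint relaxes from $i\ge j+2$ to $i\ge j+1$ is a genuinely helpful addition.
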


\begin{proof}
	As before $T_n(\tau)[k, \ell] = 0$ if $k = \ell$, since
	permutations cannot contain repeated elements.

	A permutation avoids $3154\-2$ if and only if both the reduction of the last $n-1$ elements avoid $3154\-2$ and, if $\pi_1\pi_2\pi_3\pi_4 \sim 2143$, then $k = \ell+1$.  The proof is analogous to the first paragraph in the proof of Lemma~\ref{lem:2153-4Rec}.
    
    Now, this means that a permutation beginning with $k, \ell$ avoids $3154\-2$ only if the reduction of the last $n-1$ elements avoids $3154\-2.$  Since there is a natural bijection between reductions of the last $n-1$ elements and the permutations, there are $T_{n-1}(\tau)[\ell-1]$ such permutations if $k<\ell$ and $T_{n-1}(\tau)[\ell]$ such permutations otherwise, as it depends on what $\ell$ reduces to.
    
    However, we must consider the possibility that the last $n-1$ elements avoid $3154\-2$ but $\pi_1\pi_2\pi_3\pi_4 \sim 2143$ when $\pi_1$ and $\pi_2$ are not consecutive.  This never happens if $k < \ell$ or if $k = \ell+1$, so we have proven our recursion for this case.  Now, assume $k > \ell+1.$  Note that if $\pi_1\pi_2\pi_3\pi_4 \sim 2143$, then $\pi_2 < \pi_3$ implies that the last $n-1$ elements avoiding $3154\-2$ is equivalent to the last $n-2$ elements avoiding $3154\-2.$ Now, we sum up the ways this can occur given a fixed $\pi_3$ and $\pi_4.$  Since $\pi_3, \pi_4 > k \ge 2,$ we again say $\pi_3 = i+2, \pi_4 = j+2.$  Then, $\pi_3, \pi_4$ reduce to $i, j$.  We now just need to count the number of times when $i+2 > j+2 > k$, and the reduction of the last $n-2$ elements avoids $3154\-2.$  Based on definition, this clearly equals $T_{n-2}(\sigma)[i, j].$  Summing over all $i, j,$ where $i \ge j+2$ and $j+2 > k,$ or $j \ge k-1,$ we get our desired result and we are done with our recursion.
\end{proof}


\subsection{Proof of equivalence}
Now that we have proved both Lemmas~\ref{lem:2153-4Rec} and \ref{lem:3154-2Rec},
we also make the following remark.
\begin{proposition}
	\label{prop:easyH}
	For any $n$, we have
	\begin{enumerate}[(a)]
		\item $T_n(\sigma)[1] = T_n(\sigma)[n-1] = T_n(\sigma)[n] = T_{n-1}(\sigma)$.
		\item $T_n(\tau)[1] = T_n(\tau)[2] = T_n(\tau)[n] = T_{n-1}(\tau)$.
		\item $T_n(\sigma)[n,\ell] = T_{n-1}(\sigma)[\ell]$.
	\end{enumerate}
\end{proposition}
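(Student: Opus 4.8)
I plan to prove all three identities with one uniform device, a \emph{strip-the-first-entry bijection}. For a fixed value $c\in[n]$, the map sending a permutation $\pi\in S_n$ with $\pi_1=c$ to the reduction $\rho\in S_{n-1}$ of $\pi_2\cdots\pi_n$ is a bijection onto $S_{n-1}$, with inverse re-inserting $c$ at the front (shifting the entries $\ge c$ up by one). So for each part it will be enough to check that this map respects avoidance of the relevant pattern, and the whole argument will come down to a single elementary observation: the value placed in $\pi_1$ is too extreme to take part in any occurrence.

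For part (a), fix $c\in\{1,n-1,n\}$ and $\sigma=2153\-4$. One direction (if $\pi$ avoids $\sigma$ then so does $\rho$) is immediate. For the converse, suppose $\rho$ avoids $\sigma$ but $\pi$ contains it; then every occurrence of $\sigma$ in $\pi$ must use the entry $\pi_1$, which forces $\pi_1$ into the role of the first letter of $21534$. That letter is the second-smallest among the five selected entries, so exactly one selected entry lies below $\pi_1$ and three lie above it --- impossible for $c=1$ (nothing below), for $c=n$ (nothing above), and for $c=n-1$ (at most one entry above). Hence avoidance is preserved and $T_n(\sigma)[c]=T_{n-1}(\sigma)$ for each of the three values of $c$.

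Part (b) will be the same argument applied to $\tau=3154\-2$, whose first letter $3$ is the third-smallest of the five selected entries; a selected $\pi_1$ would then need two entries below and two above it, which is impossible when $c\in\{1,2,n\}$. Part (c) is the same bijection specialized to $c=n$ with the marking $\pi_2=\ell$ carried along: since $n$ is the global maximum, removing it leaves every remaining value unchanged, so $\rho_1=\pi_2=\ell$, and the bijection identifies $\{\pi\in S_n(\sigma):\pi_1=n,\ \pi_2=\ell\}$ with $S_{n-1}(\sigma)[\ell]$.

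An alternative, purely computational route is to read everything off Lemmas~\ref{lem:2153-4Rec} and~\ref{lem:3154-2Rec} by summing over the second coordinate: for instance $T_n(\sigma)[1]=\sum_{\ell\ge 2}T_{n-1}(\sigma)[\ell-1]=T_{n-1}(\sigma)$; the correction sum in the $k>\ell$ branch of Lemma~\ref{lem:2153-4Rec} is vacuous once $k\in\{n-1,n\}$ (it would need an index $i\ge j+2$ lying outside the range $[n-2]$), and this yields both $T_n(\sigma)[n,\ell]=T_{n-1}(\sigma)[\ell]$ and, after summing over $\ell$, $T_n(\sigma)[n]=T_n(\sigma)[n-1]=T_{n-1}(\sigma)$; Lemma~\ref{lem:3154-2Rec} handles (b) in the same way. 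I do not expect a genuine obstacle here --- this proposition is meant as a quick preparatory remark --- and the only thing to be careful about is the edge bookkeeping: verifying that the correction sums really do have empty index sets when $k\in\{n-1,n\}$, and that the reindexed sums $\sum_\ell T_n(\cdot)[k,\ell]$ run over $[n-1]$ exactly once.
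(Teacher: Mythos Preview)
Your primary bijective argument is exactly the paper's one-line proof (``the first letter cannot be the first letter of the pattern in question'') spelled out in full detail; the value-counting check you give for why $\pi_1\in\{1,n-1,n\}$ (resp.\ $\{1,2,n\}$) cannot play the role of $2$ in $2153\text{-}4$ (resp.\ $3$ in $3154\text{-}2$) is precisely the content of that sentence. Your alternative computational route via the recursion lemmas is also correct and is not in the paper, but it is overkill here since the proposition is logically prior to (and motivationally simpler than) those lemmas.
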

\begin{proof}
	In all cases, the first letter cannot
	be the first letter of the pattern in question.
\end{proof}

We are now ready to prove the main result.
\begin{theorem}
	\label{thm:H}
	For $n \ge 5$ and $k \neq \ell$ the following statements hold.
	\begin{enumerate}[(a)]
		\item For $k < \ell < n$,
			we have $T_n(\sigma)[k, \ell] = T_{n-1}(\tau)[\ell]$.
			For $k < \ell = n$ we have
			we have $T_n(\sigma)[k, \ell] = T_{n-1}(\tau)[1]$.
		\item For $n = k$ and $\ell \neq n-1$,
			we have $T_n(\sigma)[n, \ell] = T_{n-1}(\tau)[\ell+1]$.
			Also, $T_n(\sigma)[n, n-1] = T_{n-1}(\tau)[n-1]$.
		\item For $n > k > \ell+1$,
			we have $T_n(\sigma)[k, \ell] = T_n(\tau)[k+1, \ell+1]$.
		\item For any $1 \le k < n$,
			we have $T_n(\sigma)[k] = T_n(\tau)[k+1]$ and
			$T_n(\sigma)[n]= T_n(\tau)[1]$.
		\item $T_n(\sigma) = T_n(\tau)$.
	\end{enumerate}
	Thus $2153\-4 \equiv 3154\-2$.
\end{theorem}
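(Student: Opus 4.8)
The plan is to prove statements (a)--(e) simultaneously by strong induction on $n$, with $n = 5$ as the base case, verified directly from the values tabulated in Appendix~\ref{sec:SHLchart} (the cases $n \le 4$ being trivial since $\sigma$ and $\tau$ have length $5$). At level $n$ I would first establish (a), (b), and (c) using only the recursions of Lemmas~\ref{lem:2153-4Rec} and~\ref{lem:3154-2Rec}, Proposition~\ref{prop:easyH}, and the induction hypothesis; then deduce (d) from (a), (c), and the recursions; and finally obtain (e) from (d) by summing over the first letter. The eventual conclusion $2153\-4\equiv 3154\-2$ is the special case of (e) (together with the trivial equality $T_n(\sigma)=T_n(\tau)=n!$ for $n\le 4$).

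Statements (a), (b), (e) are quick. For (a) with $k < \ell$, Lemma~\ref{lem:2153-4Rec} gives $T_n(\sigma)[k,\ell] = T_{n-1}(\sigma)[\ell-1]$, and part (d) at level $n-1$ rewrites this as $T_{n-1}(\tau)[\ell]$ when $2 \le \ell \le n-1$, while the boundary case $\ell = n$ collapses through Proposition~\ref{prop:easyH}(a)--(b) and (e) at level $n-2$. Statement (b) is the same computation with Proposition~\ref{prop:easyH}(c) in place of the recursion, and (e) is a one-line sum of (d). Statement (d) splits into the boundary first letters $1, n-1, n$, which are handled directly by Proposition~\ref{prop:easyH} together with (e) at level $n-1$, and the range $2 \le k \le n-2$, where I would expand both $T_n(\sigma)[k] = \sum_{\ell} T_n(\sigma)[k,\ell]$ and $T_n(\tau)[k+1] = \sum_{\ell'} T_n(\tau)[k+1,\ell']$ term by term using (a), (c), and the two recursions. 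After cancellation the difference of the two sides turns out to be exactly
\[ \sum_{\substack{i \ge j+1 \\ j \ge k}} T_{n-2}(\tau)[i,j] \;-\; \sum_{\substack{i \ge j+2 \\ j \ge k-1}} T_{n-2}(\sigma)[i,j], \]
so (d) reduces to showing that this quantity vanishes --- which is also precisely what is needed to finish the proof of (c).

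The heart of the argument, and the step I expect to be the main obstacle, is therefore the identity
\[ \sum_{\substack{i \ge j+2 \\ j \ge k-1}} T_{n-2}(\sigma)[i,j] \;=\; \sum_{\substack{i \ge j+1 \\ j \ge k}} T_{n-2}(\tau)[i,j], \]
which only involves levels $\le n-2$ and is hence within reach of the induction hypothesis. To prove it I would split the right-hand sum according to whether $i = j+1$ or $i \ge j+2$: the diagonal terms $T_{n-2}(\tau)[j+1,j]$ reduce to $T_{n-3}(\tau)[j]$ by Lemma~\ref{lem:3154-2Rec}, then to $T_{n-3}(\sigma)[j-1]$ by (d) at level $n-3$, and then to $T_{n-2}(\sigma)[n-2,\,j-1]$ by Proposition~\ref{prop:easyH}(c); the off-diagonal terms convert, via (c) at level $n-2$, to $T_{n-2}(\sigma)[i-1,\,j-1]$, which after reindexing are exactly the terms of the left-hand sum whose first coordinate is at most $n-3$. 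Thus the diagonal contribution supplies precisely the first-coordinate-$(n-2)$ terms that the reindexing omits, and the identity follows; keeping the index ranges straight at this matching step is the only genuine subtlety. Granting the identity, (c) drops out of comparing the Lemma~\ref{lem:2153-4Rec} expansion of $T_n(\sigma)[k,\ell]$ (valid since $k > \ell$) with the Lemma~\ref{lem:3154-2Rec} expansion of $T_n(\tau)[k+1,\ell+1]$ (valid since $k+1 > (\ell+1)+1$) and using (d) at level $n-1$ on the leading terms.
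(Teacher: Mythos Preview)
Your proposal is correct and follows essentially the same inductive scheme as the paper's own proof. The one organizational difference is that you isolate the identity
\[ \sum_{\substack{i \ge j+2 \\ j \ge k-1}} T_{n-2}(\sigma)[i,j] \;=\; \sum_{\substack{i \ge j+1 \\ j \ge k}} T_{n-2}(\tau)[i,j] \]
as a standalone lemma at levels $\le n-2$ and prove it once (by exactly the same mechanism the paper uses: peel off the $i=n-2$ terms on the $\sigma$-side, convert via Proposition~\ref{prop:easyH}(c) and part (d) at level $n-3$ into the diagonal $T_{n-2}(\tau)[j+1,j]$ terms, and match the rest via part (c) at level $n-2$), whereas the paper carries out this computation twice, embedded inside the proofs of (c) and of (d). Your packaging is slightly cleaner but the content is identical; note also that the paper checks both $n=5$ and $n=6$ by hand, while your observation that the identity only reaches down to level $n-3$ (and that all statements are vacuous for $n\le 4$) shows $n=5$ alone suffices.
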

\begin{proof}
	We prove all parts simultaneously by induction on $n$.
	The base cases $n=5$ and $n=6$ can be checked manually;
	see Appendix~\ref{sec:SHLchart}.

	For part (a), when $\ell \neq n$ we have
	$T_n(\sigma)[k,\ell] = T_{n-1}(\sigma)[\ell-1] = T_{n-1}(\tau)[\ell]$,
	the first equality from the recursion and the second from (d).
	If $\ell = n$ we instead get $T_{n-1}(\sigma)[n-1] = T_{n-1}(\tau)[1]$
	again from (d).

	When $k=n$ the sum in the recursion is empty and
	$T_n(\sigma)[n, \ell] = T_{n-1}(\sigma)[\ell]$,
	so part (b) follows in the same way as part (a)
	if $\ell \neq n-1$.
	As for $k = n, \ell = n-1$,
	note that both $T_n(\sigma)[n,n-1]$
	and $T_{n-1}(\tau)[n-1]$ are equal to
	$T_{n-2}(\sigma) = T_{n-2}(\tau)$
	by Proposition~\ref{prop:easyH}.

	For part (c), note that
	\begin{align*}
		T_n(\sigma)[k, \ell] &= T_{n-1}(\sigma)[\ell] - \sum_{\substack{i \ge j+2 \\ j \ge k-1}} T_{n-2}(\sigma)[i,j] \\
		&= T_{n-1}(\sigma)[\ell] - \sum_{\substack{i \ge j+2 \\ i \neq n-2 \\ j \ge k-1}} T_{n-2}(\sigma)[i,j]
			- \sum_{n-4 \ge j \ge k-1} T_{n-2}(\sigma) [n-2,j]. \\
		\intertext{By the inductive hypothesis, we now obtain}
		T_n(\sigma)[k, \ell] 
		&= T_{n-1}(\tau)[\ell+1] - \sum_{\substack{i \ge j+2 \\ i \neq n-2 \\ j \ge k-1}} T_{n-2}(\tau)[i+1,j+1]
			- \sum_{n-4 \ge j \ge k-1} T_{n-3}(\sigma) [j] \\
		&= T_{n-1}(\tau)[\ell+1] - \sum_{\substack{i \ge j+2 \\ i \neq n-2 \\ j \ge k-1}} T_{n-2}(\tau)[i+1,j+1]
			- \sum_{n-4 \ge j \ge k-1} T_{n-3}(\tau) [j+1] \\
		&= T_{n-1}(\tau)[\ell+1] - \sum_{\substack{i \ge j+2 \\ i \neq n-2 \\ j \ge k-1}} T_{n-2}(\tau)[i+1,j+1]
			- \sum_{n-3 \ge j \ge k} T_{n-3}(\tau) [j] \\
		&= T_{n-1}(\tau)[\ell+1] - \sum_{\substack{i \ge j+2 \\ j \ge k}} T_{n-2}(\tau)[i,j]
			- \sum_{n-3 \ge j \ge k} T_{n-2}(\tau) [j+1, j] \\
		\intertext{Merging the sums,}
		T_n(\sigma)[k, \ell] 
		&= T_{n-1}(\tau)[\ell+1] - \sum_{\substack{i \ge j+1 \\ j \ge k}} T_{n-2}(\tau)[i,j]  \\
		&= T_{n-1}(\tau)[\ell+1] - \sum_{i > j \ge (k+1)-1} T_{n-2}(\tau)[i,j] \\
		&= T_n(\tau)[k+1, \ell+1].
	\end{align*}

	Finally we prove part (d).
	From Proposition~\ref{prop:easyH}, we have the equalities.
	\[
		T_n(\sigma)[1] = T_n(\tau)[2], \qquad
		T_n(\sigma)[n-1] = T_n(\tau)[n], \qquad
		T_n(\sigma)[n] = T_n(\tau)[1].
	\]
	This eliminates the cases $k=1, n-1, n$.

	So now assume $2 \le k \le n-2$;
	we wish to show that $T_n(\sigma)[k] = T_n(\tau)[k+1]$.
	Notice we have the equalities
	\begin{align*}
		T_n(\sigma)[k,1] &= T_n(\tau)[k+1, 2] \\
		T_n(\sigma)[k,2] &= T_n(\tau)[k+1, 3] \\
		&\vdotswithin= \\
		T_n(\sigma)[k,k-2] &= T_n(\tau)[k+1,k-1] \\
		T_n(\sigma)[k,k+1] &= T_{n-1}(\tau)[k+1] = T_n(\tau)[k+1,k+2] \\
		T_n(\sigma)[k,k+2] &= T_{n-1}(\tau)[k+2] = T_n(\tau)[k+1,k+3] \\
		T_n(\sigma)[k,k+3] &= T_{n-1}(\tau)[k+3] = T_n(\tau)[k+1,k+4] \\
		&\vdotswithin= \\
		T_n(\sigma)[k,n-1] &= T_{n-1}(\tau)[n-1] = T_n(\tau)[k+1,n].
	\end{align*}
	By matching together all these terms,
	we see that it only remains to show
	\[
		T_n(\sigma)[k,k-1] + T_n(\sigma)[k,n]
		= T_n(\tau)[k+1,1] + T_n(\tau)[k+1,k].
	\]

	Notice that by Lemma \ref{lem:2153-4Rec} and our induction hypothesis,
	\begin{align*}
		T_n(\sigma)[k,k-1] &= T_{n-1}(\sigma)[k-1] - \sum\limits_{\substack{i \ge j+2 \\ j \ge k-1}} T_{n-2}(\sigma)[i, j] \\
        &= T_{n-1}(\tau)[k] - \sum\limits_{\substack{n-3 \ge i \ge j+2 \\ j \ge k-1}} T_{n-2}(\sigma)[i, j] - \sum\limits_{n-4 \ge j \ge k-1} T_{n-2}(\sigma)[n-2, j] \\
        &= T_{n-1}(\tau)[k] - \sum\limits_{\substack{i \ge j+2 \\ j \ge k}} T_{n-2}(\tau)[i, j] - \sum\limits_{n-4 \ge j \ge k-1} T_{n-2}(\sigma)[n-2, j].
	\end{align*}
    However, for any $j \le n-4,$ $T_{n-2}(\sigma)[n-2, j] = T_{n-3}(\sigma)[j]$ by Proposition \ref{prop:easyH}, and by our induction hypothesis, this equals $T_{n-3}(\tau)[j+1]$, which in turn equals $T_{n-2}(\tau)[j+2, j+1]$ by our recursion.  Therefore, 
    \begin{align*}
    	T_n(\sigma)[k, k-1] &= T_{n-1}(\tau)[k] - \sum\limits_{\substack{i \ge j+2 \\ j \ge k}} T_{n-2}(\tau)[i, j] - \sum\limits_{n-4 \ge j \ge k-1} T_{n-2}(\tau)[j+2, j+1] \\
        &= T_{n-1}(\tau)[k] - \sum\limits_{i > j \ge k} T_{n-2}(\tau)[i, j].
    \end{align*}
	In light of $T_{n-1}(\tau)[k+1,k] = T_{n-1}(\tau)[k]$,
	it suffices then to show that
	\[ T_n(\sigma)[k,n] = T_n(\tau)[k+1,1] + \sum_{i>j\ge k} T_{n-2}(\tau)[i,j]. \]
	Finally, we note $T_n(\sigma)[k,n] = T_{n-1}(\tau)[1]$ by (a);
	substituting this above completes the proof of (d).

	Part (e) is immediate by part (d);
	this completes the induction,
	and the Wilf-equivalence follows.
\end{proof}

\newpage
\appendix
\section{Enumeration}
\label{sec:oeis}

The following tables list the number of avoiding permutations
in each equivalence class for $5 \le n \le 11$.

\begin{center}
	\centering
	\begin{tabular}{|crrrrrrr|}
		\hline
		& $n=5$ & $n=6$ & $n=7$ & $n=8$ & $n=9$ & $n=10$ & $n=11$ \\ \hline
A & $119$ & $704$ & $4838$ & $37864$ & $332456$ & $3236090$ & $34585138$ \\
B & $119$ & $704$ & $4838$ & $37864$ & $332476$ & $3236740$ & $34599992$ \\
C & $119$ & $704$ & $4838$ & $37865$ & $332477$ & $3236426$ & $34590177$ \\
D & $119$ & $704$ & $4838$ & $37866$ & $332516$ & $3237362$ & $34609120$ \\
E & $119$ & $704$ & $4838$ & $37867$ & $332537$ & $3237698$ & $34614147$ \\
F & $119$ & $704$ & $4838$ & $37868$ & $332546$ & $3237620$ & $34609332$ \\
G & $119$ & $704$ & $4838$ & $37868$ & $332558$ & $3238028$ & $34618998$ \\
H & $119$ & $704$ & $4838$ & $37870$ & $332606$ & $3238886$ & $34633106$ \\
I & $119$ & $704$ & $4838$ & $37870$ & $332606$ & $3238891$ & $34633233$ \\
J & $119$ & $704$ & $4838$ & $37870$ & $332620$ & $3239356$ & $34644052$ \\
K & $119$ & $704$ & $4838$ & $37870$ & $332622$ & $3239412$ & $34645000$ \\
L & $119$ & $704$ & $4838$ & $37874$ & $332696$ & $3240416$ & $34657116$ \\
M & $119$ & $704$ & $4838$ & $37875$ & $332731$ & $3241219$ & $34672985$ \\
N & $119$ & $704$ & $4838$ & $37879$ & $332845$ & $3243505$ & $34713895$ \\
O & $119$ & $704$ & $4839$ & $37886$ & $332821$ & $3241738$ & $34671733$ \\
P & $119$ & $704$ & $4839$ & $37887$ & $332842$ & $3242069$ & $34676639$ \\
Q & $119$ & $704$ & $4839$ & $37888$ & $332873$ & $3242738$ & $34689337$ \\
R & $119$ & $704$ & $4839$ & $37890$ & $332911$ & $3243252$ & $34695475$ \\
S & $119$ & $704$ & $4839$ & $37895$ & $333036$ & $3245568$ & $34734865$ \\
T & $119$ & $704$ & $4839$ & $37897$ & $333096$ & $3246798$ & $34757387$ \\
U & $119$ & $704$ & $4840$ & $37908$ & $333175$ & $3247036$ & $34750496$ \\
V & $119$ & $704$ & $4840$ & $37909$ & $333198$ & $3247430$ & $34756773$ \\
W & $119$ & $704$ & $4840$ & $37912$ & $333287$ & $3249227$ & $34789373$ \\
X & $119$ & $704$ & $4840$ & $37917$ & $333398$ & $3251054$ & $34817364$ \\
Y & $119$ & $704$ & $4840$ & $37918$ & $333474$ & $3253240$ & $34865094$ \\
Z & $119$ & $705$ & $4857$ & $38142$ & $336291$ & $3289057$ & $35337067$ \\
		\hline
	\end{tabular}
\end{center}

\newpage
\section{Table of values of $S_n(2153\-4)[k, \ell]$ and $S_n(3154\-2)[k, \ell]$}
\label{sec:SHLchart}
\small

\subsection{Recursion for $S_n(\sigma)[k, \ell]$, where $\sigma = 2153\-4$.}
\[ \begin{array}{|r|rrrrr|r|}
\hline
S_5(\sigma)[k, \ell]
& 1 & 2 & 3 & 4 & 5 & \Sigma \\ \hline
k= 1 & 0 & 6 & 6 & 6 & 6 & 24 \\
k= 2 & 5 & 0 & 6 & 6 & 6 & 23 \\
k= 3 & 6 & 6 & 0 & 6 & 6 & 24 \\
k= 4 & 6 & 6 & 6 & 0 & 6 & 24 \\
k= 5 & 6 & 6 & 6 & 6 & 0 & 24 \\
\hline
\Sigma & 23 & 24 & 24 & 24 & 24 & \mathbf{119}
\\ \hline
\end{array}
\]
\[ \begin{array}{|r|rrrrrr|r|}
\hline
S_6(\sigma)[k, \ell]
& 1 & 2 & 3 & 4 & 5 & 6 & \Sigma \\ \hline
k= 1 & 0 & 24 & 23 & 24 & 24 & 24 & 119 \\
k= 2 & 18 & 0 & 23 & 24 & 24 & 24 & 113 \\
k= 3 & 22 & 21 & 0 & 24 & 24 & 24 & 115 \\
k= 4 & 24 & 23 & 24 & 0 & 24 & 24 & 119 \\
k= 5 & 24 & 23 & 24 & 24 & 0 & 24 & 119 \\
k= 6 & 24 & 23 & 24 & 24 & 24 & 0 & 119 \\
\hline
\Sigma & 112 & 114 & 118 & 120 & 120 & 120 & \mathbf{704}
\\ \hline
\end{array}
\]
\[ \begin{array}{|r|rrrrrrr|r|}
\hline
S_7(\sigma)[k, \ell]
& 1 & 2 & 3 & 4 & 5 & 6 & 7 & \Sigma \\ \hline
k= 1 & 0 & 119 & 113 & 115 & 119 & 119 & 119 & 704 \\
k= 2 & 83 & 0 & 113 & 115 & 119 & 119 & 119 & 668 \\
k= 3 & 101 & 95 & 0 & 115 & 119 & 119 & 119 & 668 \\
k= 4 & 113 & 107 & 109 & 0 & 119 & 119 & 119 & 686 \\
k= 5 & 119 & 113 & 115 & 119 & 0 & 119 & 119 & 704 \\
k= 6 & 119 & 113 & 115 & 119 & 119 & 0 & 119 & 704 \\
k= 7 & 119 & 113 & 115 & 119 & 119 & 119 & 0 & 704 \\
\hline
\Sigma & 654 & 660 & 680 & 702 & 714 & 714 & 714 & \mathbf{4838}
\\ \hline
\end{array}
\]
\[ \begin{array}{|r|rrrrrrrr|r|}
\hline
S_8(\sigma)[k, \ell]
& 1 & 2 & 3 & 4 & 5 & 6 & 7 & 8 & \Sigma \\ \hline
k= 1 & 0 & 704 & 668 & 668 & 686 & 704 & 704 & 704 & 4838 \\
k= 2 & 469 & 0 & 668 & 668 & 686 & 704 & 704 & 704 & 4603 \\
k= 3 & 563 & 527 & 0 & 668 & 686 & 704 & 704 & 704 & 4556 \\
k= 4 & 632 & 596 & 596 & 0 & 686 & 704 & 704 & 704 & 4622 \\
k= 5 & 680 & 644 & 644 & 662 & 0 & 704 & 704 & 704 & 4742 \\
k= 6 & 704 & 668 & 668 & 686 & 704 & 0 & 704 & 704 & 4838 \\
k= 7 & 704 & 668 & 668 & 686 & 704 & 704 & 0 & 704 & 4838 \\
k= 8 & 704 & 668 & 668 & 686 & 704 & 704 & 704 & 0 & 4838 \\
\hline
\Sigma & 4456 & 4475 & 4580 & 4724 & 4856 & 4928 & 4928 & 4928 & \mathbf{37875}
\\ \hline
\end{array}
\]
{\scriptsize
\[ \begin{array}{|r|rrrrrrrrr|r|}
\hline
S_9(\sigma)[k, \ell]
& 1 & 2 & 3 & 4 & 5 & 6 & 7 & 8 & 9 & \Sigma \\ \hline
k= 1 & 0 & 4838 & 4603 & 4556 & 4622 & 4742 & 4838 & 4838 & 4838 & 37875 \\
k= 2 & 3119 & 0 & 4603 & 4556 & 4622 & 4742 & 4838 & 4838 & 4838 & 36156 \\
k= 3 & 3690 & 3455 & 0 & 4556 & 4622 & 4742 & 4838 & 4838 & 4838 & 35579 \\
k= 4 & 4136 & 3901 & 3854 & 0 & 4622 & 4742 & 4838 & 4838 & 4838 & 35769 \\
k= 5 & 4481 & 4246 & 4199 & 4265 & 0 & 4742 & 4838 & 4838 & 4838 & 36447 \\
k= 6 & 4719 & 4484 & 4437 & 4503 & 4623 & 0 & 4838 & 4838 & 4838 & 37280 \\
k= 7 & 4838 & 4603 & 4556 & 4622 & 4742 & 4838 & 0 & 4838 & 4838 & 37875 \\
k= 8 & 4838 & 4603 & 4556 & 4622 & 4742 & 4838 & 4838 & 0 & 4838 & 37875 \\
k= 9 & 4838 & 4603 & 4556 & 4622 & 4742 & 4838 & 4838 & 4838 & 0 & 37875 \\
\hline
\Sigma & 34659 & 34733 & 35364 & 36302 & 37337 & 38224 & 38704 & 38704 & 38704 & \mathbf{332731}
\\ \hline
\end{array}
\] }

\subsection{Recursion for $S_n(\tau)[k, \ell]$, where $\tau = 3154\-2$.}
\[ \begin{array}{|r|rrrrr|r|}
\hline
S_5(\tau)[k, \ell]
& 1 & 2 & 3 & 4 & 5 & \Sigma \\ \hline
k= 1 & 0 & 6 & 6 & 6 & 6 & 24 \\
k= 2 & 6 & 0 & 6 & 6 & 6 & 24 \\
k= 3 & 5 & 6 & 0 & 6 & 6 & 23 \\
k= 4 & 6 & 6 & 6 & 0 & 6 & 24 \\
k= 5 & 6 & 6 & 6 & 6 & 0 & 24 \\
\hline
\Sigma & 23 & 24 & 24 & 24 & 24 & \mathbf{119}
\\ \hline
\end{array}
\]
\[ \begin{array}{|r|rrrrrr|r|}
\hline
S_6(\tau)[k, \ell]
& 1 & 2 & 3 & 4 & 5 & 6 & \Sigma \\ \hline
k= 1 & 0 & 24 & 24 & 23 & 24 & 24 & 119 \\
k= 2 & 24 & 0 & 24 & 23 & 24 & 24 & 119 \\
k= 3 & 18 & 24 & 0 & 23 & 24 & 24 & 113 \\
k= 4 & 22 & 22 & 23 & 0 & 24 & 24 & 115 \\
k= 5 & 24 & 24 & 23 & 24 & 0 & 24 & 119 \\
k= 6 & 24 & 24 & 23 & 24 & 24 & 0 & 119 \\
\hline
\Sigma & 112 & 118 & 117 & 117 & 120 & 120 & \mathbf{704}
\\ \hline
\end{array}
\]
\[ \begin{array}{|r|rrrrrrr|r|}
\hline
S_7(\tau)[k, \ell]
& 1 & 2 & 3 & 4 & 5 & 6 & 7 & \Sigma \\ \hline
k= 1 & 0 & 119 & 119 & 113 & 115 & 119 & 119 & 704 \\
k= 2 & 119 & 0 & 119 & 113 & 115 & 119 & 119 & 704 \\
k= 3 & 83 & 119 & 0 & 113 & 115 & 119 & 119 & 668 \\
k= 4 & 101 & 101 & 113 & 0 & 115 & 119 & 119 & 668 \\
k= 5 & 113 & 113 & 107 & 115 & 0 & 119 & 119 & 686 \\
k= 6 & 119 & 119 & 113 & 115 & 119 & 0 & 119 & 704 \\
k= 7 & 119 & 119 & 113 & 115 & 119 & 119 & 0 & 704 \\
\hline
\Sigma & 654 & 690 & 684 & 684 & 698 & 714 & 714 & \mathbf{4838}
\\ \hline
\end{array}
\]
\[ \begin{array}{|r|rrrrrrrr|r|}
\hline
S_8(\tau)[k, \ell]
& 1 & 2 & 3 & 4 & 5 & 6 & 7 & 8 & \Sigma \\ \hline
k= 1 & 0 & 704 & 704 & 668 & 668 & 686 & 704 & 704 & 4838 \\
k= 2 & 704 & 0 & 704 & 668 & 668 & 686 & 704 & 704 & 4838 \\
k= 3 & 469 & 704 & 0 & 668 & 668 & 686 & 704 & 704 & 4603 \\
k= 4 & 563 & 563 & 668 & 0 & 668 & 686 & 704 & 704 & 4556 \\
k= 5 & 632 & 632 & 596 & 668 & 0 & 686 & 704 & 704 & 4622 \\
k= 6 & 680 & 680 & 644 & 644 & 686 & 0 & 704 & 704 & 4742 \\
k= 7 & 704 & 704 & 668 & 668 & 686 & 704 & 0 & 704 & 4838 \\
k= 8 & 704 & 704 & 668 & 668 & 686 & 704 & 704 & 0 & 4838 \\
\hline
\Sigma & 4456 & 4691 & 4652 & 4652 & 4730 & 4838 & 4928 & 4928 & \mathbf{37875}
\\ \hline
\end{array}
\]
{\scriptsize
\[ \begin{array}{|r|rrrrrrrrr|r|}
\hline
S_9(\tau)[k, \ell]
& 1 & 2 & 3 & 4 & 5 & 6 & 7 & 8 & 9 & \Sigma \\ \hline
k= 1 & 0 & 4838 & 4838 & 4603 & 4556 & 4622 & 4742 & 4838 & 4838 & 37875 \\
k= 2 & 4838 & 0 & 4838 & 4603 & 4556 & 4622 & 4742 & 4838 & 4838 & 37875 \\
k= 3 & 3119 & 4838 & 0 & 4603 & 4556 & 4622 & 4742 & 4838 & 4838 & 36156 \\
k= 4 & 3690 & 3690 & 4603 & 0 & 4556 & 4622 & 4742 & 4838 & 4838 & 35579 \\
k= 5 & 4136 & 4136 & 3901 & 4556 & 0 & 4622 & 4742 & 4838 & 4838 & 35769 \\
k= 6 & 4481 & 4481 & 4246 & 4199 & 4622 & 0 & 4742 & 4838 & 4838 & 36447 \\
k= 7 & 4719 & 4719 & 4484 & 4437 & 4503 & 4742 & 0 & 4838 & 4838 & 37280 \\
k= 8 & 4838 & 4838 & 4603 & 4556 & 4622 & 4742 & 4838 & 0 & 4838 & 37875 \\
k= 9 & 4838 & 4838 & 4603 & 4556 & 4622 & 4742 & 4838 & 4838 & 0 & 37875 \\
\hline
\Sigma & 34659 & 36378 & 36116 & 36113 & 36593 & 37336 & 38128 & 38704 & 38704 & \mathbf{332731}
\\ \hline
\end{array}
\]}

\acknowledgements
This research was funded by NSF grant 1358659 and NSA grant H98230-16-1-0026
as part of the 2016 Duluth Research Experience for Undergraduates (REU).

The authors warmly thank Andrew Baxter for many helpful conversations,
as well as for several pointers to the literature and known results.
The authors also thank Joe Gallian for supervising the research.
The authors would also like to thank both Baxter and Gallian for suggesting the problem.

Finally, the authors would like to thank Gallian and Levent Alpoge
as well as the referees, for helpful comments on drafts of the paper.

\bibliographystyle{plain}
\bibliography{wilf-refs.bib}

\end{document}